\pgfplotsset{compat=1.18} 
\newtheorem{conjecture}{Conjecture}
\newtheorem{remark}{Remark}
\newtheorem{theorem}{Theorem}
\Crefname{theorem}{Thm.}{Thms.}
\newcommand{\thistheoremname}{}
\newtheorem*{genericconj*}{\thistheoremname}
\newenvironment{namedconj*}[1]
  {\renewcommand{\thistheoremname}{#1}%
   \begin{genericconj*}}
  {\end{genericconj*}}
\newcommand{\x}{\mathbf{x}}
\newcommand{\z}{\mathbf{z}}
\newcommand{\aux}{\mathbf{a}}
\newcommand{\s}{\mathbf{s}}
\newcommand{\C}{\mathcal{C}}
\newcommand{\V}{\mathcal{V}}
\newcommand{\D}{\mathcal{D}}
\newcommand{\vv}{\mathbf{v}}
\newcommand{\bl}{\mathbf{b}_{l}}
\newcommand{\Zk}{\mathbb{Z}_{k}}
\newcommand{\UNSAT}{\texttt{unsat}}
\newlength{\Rpic}\setlength{\Rpic}{2.7cm}
\newcommand\numberthis{\addtocounter{equation}{1}\tag{\theequation}}
\newtheorem{lemma}{Lemma}
\title{On the Emergence of Ergodic Dynamics in Unique Games}
\author[a,1]{Tuhin Sahai}
\author[b]{Abeynaya Gnanasekaran}
\affil[a]{SRI International, Menlo Park, CA 94025.}
\affil[b]{RTX Technology Research Center (RTRC), Berkeley, CA, USA 94705.}
\affil[1]{Corresponding author: Tuhin Sahai, tuhin.sahai@ gmail.com}
\begin{document}
\maketitle

\abstract{
The Unique Games Conjecture (UGC) constitutes a highly dynamic subarea within computational complexity theory, intricately linked to the outstanding P versus NP problem. Despite multiple insightful results in the past few years, a proof for the conjecture remains elusive. In this work, we construct a novel dynamical systems-based approach for studying unique games and, more generally, the field of computational complexity. We propose a family of dynamical systems whose equilibria correspond to solutions of unique games and prove that unsatisfiable instances lead to ergodic dynamics. Moreover, as the instance hardness increases, the weight of the invariant measure in the vicinity of the optimal assignments scales
polynomially, sub-exponentially, or exponentially depending on the value gap. We numerically reproduce a previously hypothesized hardness plot associated with the UGC. Our results indicate that the UGC is likely true, subject to our proposed conjectures that link dynamical systems theory with computational complexity. 
}

\section{Introduction}
The intersection of optimization theory and dynamical systems is rapidly emerging as a fertile area of research~\cite{sahai2020dynamical}. For instance, new insights related to the convergence of accelerated gradient methods were recently obtained by considering the continuous limits of these optimization schemes and the resulting dynamical systems~\cite{Cit:Nesterov_dyn,Cit:variational_nesterov}. Dynamical systems have been also used to construct state-of-the-art algorithms for graph clustering~\cite{sahai2010wave,Cit:sahai_hearing,zhu2022dynamic} by embedding graph structures into continuous spaces~\cite{Cit:belkin}. Flows on manifolds have been related to optimization problems such as least squares~\cite{Bro89},the traveling salesman problem~\cite{Cit:TSP}, graph matching~\cite{ZP08}, and linear programming~\cite{Bro91}. However, for the most part, the number of known connections between dynamical systems and optimization theories remain limited and anecdotal.

Previously, novel connections between dynamical systems theory and the iconic $K$-SAT problem have been identified~\cite{ercsey2011optimization,ercsey2012chaos}. In particular, the authors constructed a dynamical system such that the equilibria of the system of equations correspond to solutions of the $K$-SAT instance. They numerically observed that the transition from satisfiable (\texttt{sat})  to unsatisfiable (\UNSAT) $K$-SAT instances corresponds to the emergence of transient chaos.

In this work, our goal is to make fundamental connections between the theories of dynamical systems and computational complexity. In particular, we start with the unique games conjecture (UGC) that was originally posed to study the hardness of approximation of NP-hard problems. Generalizing~\cite{ercsey2011optimization}, we embed instances of unique games into corresponding dynamical systems, such that, their solutions form a one-to-one map to stable equilibria of the resultant dynamical system. Moreover, the dynamical systems have \emph{no attractors} that can trap trajectories. For unsatisfiable instances, the resulting dynamical systems are proven to be ergodic~\cite{cornfeld2012ergodic}. Here, ergodicity refers to the property of the dynamical system such that time--and space--averages converge to the same distribution (invariant measure)~\cite{cornfeld2012ergodic,katok1995introduction}. Just as in~\cite{ercsey2011optimization}, our dynamical systems are chaotic, however, the corresponding Lyapunov exponents can be made arbitrarily small (by picking higher values of exponents, see~\Cref{fig:fsle_average} in~\Cref{sec:sens_dep}). We present a family of \UNSAT~instances for the $K$-SAT problem whose unsatisfiability \emph{is computable in polynomial time} and yet gives rise to chaotic dynamics, demonstrating that chaos cannot be an appropriate indicator for hardness. These results run contrary to previous results and assertions~\cite{ercsey2011optimization,ercsey2012chaos}. 

Instead, we show that ergodicity provides a more appropriate ``score'' of problem hardness. Specifically, the weight of the ergodic invariant measure in the vicinity of the optimal assignments decreases polynomially, sub-exponentially, or exponentially as a function of the ``gap'' as hypothesized in~\cite{barak2018halfway}. Our work indicates that the UGC is \emph{likely true}. We propose conjectures that connect the two seemingly disparate areas of UGC and dynamical systems research.

Our paper is organized as follows, we start by constructing a new mapping from any unique game to a corresponding $K$-SAT instance. We then generalize~\cite{ercsey2011optimization} to a family of dynamical systems that can used to study the UGC. We analyze and present key properties of the resulting dynamical system and prove the emergence of ergodicity in \UNSAT~instances. We numerically show that as the \UNSAT~instances become ``harder'' (gap is decreased), the decay rate of the fraction of time the dynamical system spends in the vicinity of optimal assignments, transitions from polynomial scaling to sub-exponential and then to exponential, \emph{consistent with the UGC}. Our computed scalings, in parameter space, match a previously hypothesized plot~\cite{barak2018halfway}. Based on our results, we end the paper with three conjectures that connect the areas of dynamical systems theory and computational complexity.

\section{The Unique Games Conjecture (UGC)}
The UGC was formulated by Khot~\cite{khot2002power} in 2002 in an effort to shed light on the hardness of approximation of NP-hard problems such as label cover and mod-$2$ linear equations. The investigation of the conjecture and its implications has rapidly become one of the most active subareas of research in complexity theory, see~\cite{charikar2006near,trevisan2012khot,arora2010subexponential,o2012new} and references therein. Over the years, deep and surprising connections have been found between the conjecture and various problems. For example, if the conjecture~\cite{khot2002power} is true, it implies that the Goemans--Williamson algorithm for the MAX-CUT problem is optimal~\cite{khot2007optimal}. Similar approximation bounds have been found for the vertex cover~\cite{khot2008vertex}, betweenness~\cite{charikar2009every}, and Max 2-SAT~\cite{khot2007optimal} problems. The UGC is also germane to the settings of voting systems~\cite{isaksson2012maximally}, discrete Fourier transform~\cite{khot2005unique}, geometry~\cite{khot2005unique}, and surface areas computations of foam~\cite{feige2007understanding} to name a few.

The conjecture can be stated in terms of a system of linear equations of two variables on $\Zk$ (set of integers mod $k$), where the $l$-th equation is defined as follows~\cite{khot2002power,khot2016candidate,trevisan2012khot},
\begin{align}
\x_{i} =\x_{j} + \mathbf{b}_{l}\,\,\text{mod}\,\, k.
\label{eqn:2lin}
\end{align}
Here, $\x_{i}$, $\x_{j}$, and $\mathbf{b}_{l}$ all take integer values in the range $\left[0, k-1\right]$. Given predefined values for $\mathbf{b}$, the goal is to find values for $\x=\{\x_{1},\x_{2},\hdots,\x_{n}\}$, such that, the number of satisfied equations in Eqn.~\ref{eqn:2lin} are maximized. If there exists an assignment for $\x$ such that \emph{all} equations are satisfied, the instance is deemed \emph{satisfiable} ($\texttt{sat}$). Conversely, if only a fraction of the equations can be satisfied, the instance is deemed \emph{unsatisfiable} (\UNSAT). In the following, we will refer to the above system of equations as $2$-Lin-$k$. The unique games conjecture is posed in terms of the \UNSAT~instances as follows,
\begin{namedconj*}{Unique games conjecture}[Khot 2002]
For every $0<\epsilon <\frac{1}{2}$, given an instance of linear equations defined on $\mathbb{Z}_{k}$ (as shown in Eqn.~\ref{eqn:2lin}), in which a $1-\epsilon$ fraction of equations can be satisfied, then there exists a value of $k$ such that no polynomial time algorithm can find a solution that satisfies at least an $\epsilon$ fraction of equations.
\end{namedconj*}
The conjecture is striking, since it claims that, in general, for instances of  system~(\ref{eqn:2lin}) for which $1-\epsilon$ equations are satisfiable, \emph{no polynomial time algorithm} can guarantee satisfying an $\epsilon$ fraction of them. We note that there is an alternative (equivalent) form of the conjecture that is posed using promise problems~\cite{goldreich2006promise}. To analyze the conjecture using the lens of dynamical systems theory, we now construct dynamical systems such that their attractors correspond to solutions of instances of linear equations defined over $\Zk$. We build these dynamical systems by embedding the linear equation instances into variable size SAT problems as described below.

\subsection{Equivalent SAT problem construction}
Satisfiability problems are defined using Boolean variables and logical operations (we denote the \emph{or} and \emph{and} operators by $\vee$ and $\wedge$ respectively)~\cite{cook1971complexity}. We focus on the standard conjunctive normal form (CNF) formulae which are conjunctions of clauses and the clauses are, in turn, disjunctions of variables.

Now consider a system of $n_{\text{eq}}$ linear equations defined over $\Zk$ (in Eqn.~\ref{eqn:2lin}), where the dimension of $\x$ is $n_{\x}$. We map each $\x_{i}$ variable to a set of $k$ spin variables (that are equivalent to Boolean variables), $S_{\x_{i}}=\{\s_{k(i-1)}, \s_{k(i-1) + 1},\s_{k(i-1) + 2},\hdots, \s_{k(i-1) + k-1}\}$, where $\s_l\in\{-1,1\}$.
Here, 
$\s_l=1$ corresponds to a \texttt{true} values and, conversely, $\s_l=-1$ corresponds to \texttt{false} values~\cite{khot2002power}.  Note that, although, the mapping from 2-Lin-$k$ equations to 2-SAT in~\cite{khot2002power} gives rise to SAT instances of smaller size, it results in stiffer dynamical systems, making computations challenging.

Since $\x_i$ can only take one value, 
only one element of $S_{\x_{i}}$ 
can be assigned a value of $1$ (\texttt{true}). The rest of the elements must all be $-1$ (\texttt{false}). To impose the above constraint, we include the following set of clauses for each $\x_{i}$,
\begin{align}
(\s_{k(i-1)}&\vee \s_{k(i-1) + 1}\hdots\vee \s_{ki -1})  \wedge  \nonumber\\
&(\wedge_{q_1<q_2}(\neg \s_{k(i-1) + q_1} \wedge \neg \s_{k(i-1)+q_2} )),
\label{eqn:variable_clause}
\end{align}
where the variables in the above  formula correspond to their equivalent Boolean assignments ($\{0,1\}$ values) and $\neg$ denotes the \emph{not} operation. Since each $\x_i$ maps to $k$ variables $\s_{k(i-1) + j}$,  where $0\leq j\leq k-1$, this gives rise to $M_{v} = 1 + \frac{k(k-1)}{2}$ clauses.

The 2-Lin-$k$ system of equations (Eqn.~\ref{eqn:2lin}) are bijective mappings~\cite{trevisan2012khot} and each equation corresponds to the following disjunctive normal form (DNF) formula, 
\begin{align}
((\s_{k(i-1)}&\wedge \s_{k(j-1) + \bl\,\text{mod}\,k}) \vee (\s_{k(i-1) + 1}\wedge \s_{k(j-1) + \bl+1\,\text{mod}\,k}) \hdots\nonumber\\
&\hdots\vee (\s_{ki-1}\wedge \s_{k(j-1) + \bl+k-1\,\text{mod}\,k})),
\label{eqn:dnf}
\end{align}
which can be converted to a CNF form that preserves satisfiability. A na\"ive conversion from DNF to CNF can cause an exponential increase in the number of clauses. However, we use the transformation outlined in~\cite{jackson2004optimality} that results in a linear increase in the number of clauses at the cost of introducing new variables $\z$. We note that, for every equation in Eqn.~\ref{eqn:2lin}, the final number of clauses in the CNF formula is $1+2k$ at the cost of introducing $k$ additional variables. For further details regarding the introduction of the variables and the resulting CNF formulae, we refer the reader to~\Cref{sec:mapping_conv}. 

One can verify that the total number of resulting variables and clauses are $N=(n_{\x} + n_{\text{eq}})k$ and $M=n_{\x}\frac{k(k-1)}{2} + n_{\text{eq}}(1+2k)$, respectively. An interesting aspect of the resulting SAT instance is that the term $(\s_{k(i-1)}\vee \s_{k(i-1) + 1}\vee...\vee \s_{ki-1})$ in Eqn.~\ref{eqn:variable_clause} is redundant with Eqn.~\ref{clause} in~\Cref{sec:mapping_conv} and can be ignored~\cite{khot2002power}. 

\section{Dynamical Systems for the UGC}
For SAT equations, we concatenate all the variables into one state vector $\vv = \left[\x,\z\right]$ (where $\z$ are the additional variables introduced during the DNF to CNF conversion outlined previously). Now, let $\s_{p} = \left[-1,1\right]$, i.e., $\s_{p}$ can take real values between $-1$ and $1$. 
Generalizing the dynamical system for satisfiability problems constructed in~\cite{ercsey2011optimization}, one can define $c_{mp}$ and ${K_m}$ as follows,
\begin{align*}
c_{mp} =
\begin{cases}
-1, & \text{if}\ \s_{p}\,\, \text{appears in negated form in $m$-th clause,}  \\
1, & \text{if}\ \s_{p}\,\, \text{appears in direct form in $m$-th clause,}  \\
0, & \text{otherwise},
\end{cases}
\end{align*}

\begin{align*}
K_m(\s) = 2^{-k} \prod_{p=1}^{N} (1-c_{mp}\s_{p}) \quad \forall m=1,2,\hdots,M.
\end{align*}
Note that $K_m(\s)=0$, if and only if the $m$-th clause is satisfied, i.e., $c_{mp}\s_{p}=1$ for at least one variable that appears in clause $m$. Following the steps in~\cite{ercsey2011optimization},  we define an energy function of the form $\V(\s,\aux)=\sum_{m=1}^{M}a_{m}K_{m}(\s)^2$, such that, $\V(\s^{*},\aux)=0$ only at solutions $\s^{*}$ of the satisfiability problem (and consequently the 2-Lin-$k$ system). The \emph{auxiliary} variables $a_{m}\in (0,\infty)$, prevent  non-solution attractors from trapping the search dynamics, and can be viewed as a form of Lagrange multipliers (see~\cite{ercsey2011optimization} for more information). We now generalize the system to,
\begin{align}
\frac{d\s_{l}}{dt} &= -(\nabla_s\V(\s,\aux)),  \nonumber\\
 &= \sum_{m=1}^{M} 2a_m c_{ml}K_{ml}(\s)K_m(\s) \quad \forall l=1,2,\hdots,N,\nonumber  \\
\frac{da_m}{dt} &= a_m(K_m)^{\alpha} \quad \forall m=1,2,\hdots,M,
\label{eqn:dyn}
\end{align}
where,
\begin{align*}
K_{ml}(\s) = 2^{-k} \prod_{\substack{p=1\\p\neq l}}^{N} (1-c_{mp}\s_{p}).
\end{align*}
Note that the dynamical system in~\cite{ercsey2011optimization}, corresponds to $\alpha=1$ in the above system of equations. In~\cite{ercsey2011optimization}, the authors find that as the constraint density of the $K$-SAT problem increases, the trajectories of the dynamical system display transient chaos with fractal basin boundaries~\cite{ercsey2011optimization,ercsey2012chaos}. The authors determine the existence of transient chaos by demonstrating the emergence of positive values for finite size Lyapunov exponents (FSLE)~\cite{FSLE,strogatz2018nonlinear} in certain parameter regimes. In this work, we will use the same numerical indicator for chaos with the caveat that, typically, other criteria are required to elucidate the route to chaos~\cite{guckenheimer1983local}.  Moreover, the authors claim that the emergence of chaos corresponds to the well known phase transitions in the $K$-SAT problem~\cite{ercsey2011optimization,monasson1999determining}. However, we note that \emph{any} problem instance that is unsatisfiable, will map to a chaotic dynamical system (see~\Cref{thm: lyapunov} in~\Cref{sec:sens_dep}). One can, therefore, construct examples that refute the proposed correspondence between NP-hardness and chaos. For example, any $3$-SAT instance that contains the following clause structure $(y_{i_1}\vee y_{i_2} \vee y_{i_3})\wedge(y_{i_1}\vee y_{i_2} \vee \neg y_{i_3})\wedge (y_{i_1}\vee \neg y_{i_2} \vee  y_{i_3})\wedge (\neg y_{i_1}\vee y_{i_2} \vee y_{i_3})\wedge(y_{i_1}\vee \neg y_{i_2} \vee \neg y_{i_3})\wedge(\neg y_{i_1}\vee  y_{i_2} \vee \neg y_{i_3})\wedge(\neg y_{i_1}\vee \neg y_{i_2} \vee   y_{i_3})\wedge(\neg y_{i_1}\vee \neg y_{i_2} \vee \neg  y_{i_3})$ \emph{must} be \UNSAT~and result in chaotic dynamical systems (as shown in~\Cref{thm: lyapunov}). For \emph{any} Boolean formula, one can search for this structure in polynomial time. Thus, we assert that the transient chaos is \emph{not} an appropriate marker for the hardness of an instance. We also note that this structure can trivially be generalized to the $K$-SAT setting.

By introducing the exponent $\alpha$, in Eqns.~\ref{eqn:dyn}, we can significantly change the magnitude of the FSLE without modifying the underlying hardness of the instance (see remark~\ref{remark:fsle}and associated~\Cref{fig:fsle_average} in~\Cref{sec:sens_dep}). Our modified dynamics preserves the desirable properties such as (a) spurious attractors that do not correspond to solutions of the SAT problem are absent, (b) solutions of the $K$-SAT correspond to \emph{stable} equilibria of the corresponding dynamical system, and (c) the values of $\s_{l}$ remain bounded within $\C=[-1,1]^N$. Example trajectories are shown in Fig.~\ref{fig:trajex} (for more details see~\cite{ercsey2011optimization}). Thus, given the equivalence between instances of the 2-Lin-$k$ system of equations (Eqn.~\ref{eqn:2lin}) and the corresponding $K$-SAT (Eqn.~\ref{eqn:variable_clause}), important connections for solutions of Eqn.~\ref{eqn:2lin} can be drawn using properties of the dynamical systems such as equilibria, stability, etc.


\begin{figure}
\centering
\resizebox{0.6\textwidth}{!}{
\input{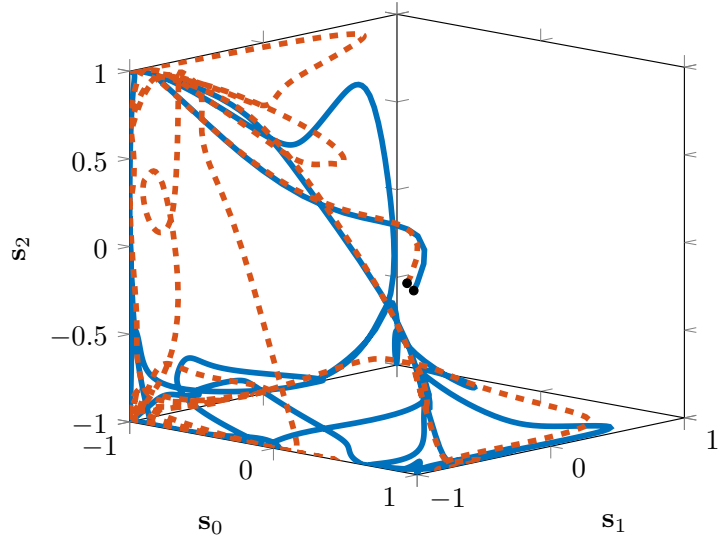}
}
\caption{Example trajectories evolving on $\C=[-1,1]^{N}$ for a simple UNSAT instance of the 2-Lin-$k$ system of equation.}
\label{fig:trajex}
\end{figure}
In the following, we show that for unsatisfiable instances of the UGC, the underlying dynamical system becomes ergodic, and the associated invariant measure~\cite{katok1995introduction} is a \emph{fundamental} quantity that captures the \emph{hardness}. For example, as the size of the instance grows, the weight of the invariant measure in the proximity of the optimal solution decreases exponentially for NP-hard problems and polynomially for tractable ones. This scaling property is found to depend on the alphabet size, the primary parameter that changes the underlying hardness. We start by proving that unsatisfiable instances of $2$-Lin-$k$ equations give rise to dynamics with sensitive dependence to initial conditions due to amplification of perturbations by the auxiliary variables. We then \emph{prove that these unsatisfiable instances of $2$-Lin-$k$ equations must result in ergodic dynamics} and numerically show that the scaling of the invariant measure is consistent with the unique games conjecture and accurately captures the various hardness regimes~\cite{arora2010subexponential,arora2010subexponential}.

Note that in~\cite{ercsey2011optimization}, the authors hypothesize that the emergence of chaos with increasing clause density is due to homoclinic and heteroclinic intersections~\cite{guckenheimer1983local}. However, we now show that the sensitive dependence to initial conditions (manifested by positive FSLE) occurs in all \UNSAT~instances of the $K$-SAT problem due to the dynamics of the auxiliary variables and their \emph{exponential growth dynamics}.
\begin{theorem}
All unsatisfiable (\UNSAT) instances of the $K$-SAT and $2$-Lin-$k$ equations result in dynamical systems (described by Eqn.~\ref{eqn:dyn}) that display sensitive dependence to initial conditions. This is a result of the exponential growth dynamics of the auxiliary variables that amplifies any initial perturbations in the values of the spin variables.
\end{theorem}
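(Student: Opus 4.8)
The plan is to show that for an \UNSAT{} instance the energy $\V(\s,\aux)=\sum_m a_m K_m(\s)^2$ is bounded below away from zero along any trajectory confined to $\C=[-1,1]^N$, and to use this to force at least one auxiliary variable $a_m$ to grow exponentially in time. First I would note that since the instance is unsatisfiable, there is no $\s^\ast\in\{-1,1\}^N$ with $K_m(\s^\ast)=0$ for all $m$; by a compactness/continuity argument on the finite set of clauses one gets a constant $\delta>0$ such that $\max_m K_m(\s)^2\ge\delta$ for every $\s\in\C$ (the $K_m$ are polynomials, $\C$ is compact, and no common zero exists in the relevant configuration set — one has to be slightly careful that the relevant minimum is over the closure, but the bound still holds with some $\delta>0$). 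Consequently, at each time $t$ there is an index $m(t)$ with $K_{m(t)}(\s(t))^2\ge\delta$, hence $K_{m(t)}(\s(t))\ge\sqrt\delta$ (the sign can be normalized, or one works with $|K_m|$). Feeding this into the auxiliary dynamics $\dot a_m = a_m K_m^\alpha$ shows that whenever clause $m$ is the ``violated'' one, $a_m$ grows at rate at least $\delta^{\alpha/2}$, so $a_m(t)\ge a_m(0)\exp(\delta^{\alpha/2}\,\tau_m(t))$ where $\tau_m(t)$ is the total time clause $m$ has been the witness; since $\sum_m\tau_m(t)=t$ and there are finitely many clauses, some $a_{m^\ast}$ accumulates witness-time $\ge t/M$ infinitely often, giving $\limsup_t a_{m^\ast}(t)=\infty$ with at least exponential rate $e^{\delta^{\alpha/2} t/M}$.

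Next I would linearize the $\s$-dynamics around a trajectory to get the sensitive-dependence conclusion. Writing the variational equation $\dot{(\delta\s)} = -\nabla^2_{\s}\V(\s(t),\aux(t))\,\delta\s$, the Hessian contains terms of the form $2a_m c_{ml}c_{ml'}K_{ml}K_{ml'}$ plus lower-order pieces, i.e.\ it is \emph{linear in the auxiliary variables}. Since at least one $a_m$ grows exponentially, the relevant block of the Hessian has an entry growing exponentially in $t$ on a positive-measure set of times, and near a violated clause the corresponding quadratic form is not identically zero in the directions of the spin variables appearing in that clause (this uses that $K_{ml}\neq0$ generically when $K_m\neq0$). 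A standard Grönwall-type lower bound — or, more carefully, a Perron/Lyapunov-exponent argument showing the largest singular value of the fundamental solution matrix grows faster than any polynomial — then yields that a generic perturbation $\delta\s(0)$ is amplified without bound, which is exactly sensitive dependence (equivalently, a strictly positive finite-size Lyapunov exponent, matching the FSLE diagnostic discussed in the text and \Cref{thm: lyapunov}). One should also invoke property (c) from the excerpt, namely that $\s(t)$ stays in $\C$, so the whole growth effect is genuinely in the auxiliary sector and cannot be an artifact of the spins escaping to infinity.

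The main obstacle I anticipate is making the amplification argument rigorous in the \emph{right directions}: the Hessian grows only in the subspace of spin variables touched by the currently-violated clause $m(t)$, and as the trajectory moves this witnessing clause can change, so one must argue that the net effect over a long time interval still stretches a \emph{generic} initial perturbation rather than one that keeps landing in the instantaneous kernel. The clean way is to establish a uniform lower bound on the integrated quadratic form — something like $\int_0^T \langle \delta\s(t), \nabla^2_\s\V\,\delta\s(t)\rangle\,dt \gtrsim \exp(c T)\|\delta\s(0)\|^2$ for $\delta\s(0)$ outside a measure-zero set — by combining the pigeonhole bound on witness-times with a transversality/accessibility observation that the clauses collectively involve enough variables (which for an \UNSAT{} instance they must, since otherwise the unconstrained variables could be set freely). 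A secondary technical point is the degenerate case where $K_m=0$ but some $K_{ml}=0$ as well, which can make individual Hessian entries vanish even when $a_m$ is huge; handling this requires choosing, among the clauses achieving the $\delta$-bound, one whose $K_{ml}$ for a participating variable is also bounded below, which again follows from a finite compactness argument on $\C$. Once these uniformity statements are in hand, the exponential growth of $\aux$ plus Grönwall gives the theorem, and the rate can be made arbitrarily small by taking $\alpha$ large (since the exponent scales like $\delta^{\alpha/2}/M$ with $\delta<1$), consistent with the remark about tunable FSLE.
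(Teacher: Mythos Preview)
Your proposal shares the paper's core mechanism---for an \UNSAT{} instance some $K_m$ stays bounded below along the trajectory, so the corresponding $a_m$ grows exponentially---but the paper's route to sensitive dependence is more direct and does not go through the Hessian. The paper perturbs a \emph{single} spin coordinate $\s_r(0)\mapsto\s_r(0)+\delta$ (with $r$ chosen via a preliminary lemma guaranteeing that $\s_r$ never settles at $\pm1$), expands $K_m'=K_m-\delta\,c_{mr}K_{mr}$ and $K_{ml}'=K_{ml}-\delta\,c_{mr}K_{mlr}$ explicitly to first order in $\delta$, Euler-iterates the difference, and arrives at $|\Delta\s_r(t)|\approx \delta\,\Delta t\sum_m\sum_{n} 2a_m'(n\Delta t)\,K_{mr}^2(n\Delta t)$. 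Since every summand is nonnegative, the sum is bounded below by a single term $2a_g(t)K_{gr}^2(t)\,\Delta t$ for any fixed clause $g$ containing $r$, yielding the explicit bound $\lambda_r\ge\lim_{t\to\infty}\tfrac{1}{t}\int_0^t K_g^\alpha(\tau)\,d\tau$, which is then shown positive by a recurrence estimate $\int_t^{t+t'}K_g^\alpha\ge\epsilon$. This one-coordinate, sign-definite reduction is exactly what dissolves the ``right directions'' obstacle you anticipated: there is no need to track which clause is the current witness, nor to control a full quadratic form.

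There is also a genuine gap in your variational-equation step as written. For the gradient flow $\dot\s=-\nabla_{\s}\V$ the linearization is $\dot{(\delta\s)}=-\nabla_{\s}^2\V\,\delta\s$, and the diagonal entries $\partial_r^2\V=\sum_m 2a_m K_{mr}^2$ are \emph{positive} and grow with $a_m$. A large positive Hessian along a direction causes fast \emph{contraction} of $\delta\s$ in that direction, not expansion; so ``a Hessian entry grows exponentially, hence Gr\"onwall gives amplification'' does not go through on its own. To repair your route you must either exhibit a genuinely negative eigenvalue of $\nabla_{\s}^2\V$ along the trajectory (which would have to come from the indefinite off-diagonal blocks $\partial_r\partial_l\V=\sum_m 4a_m c_{mr}c_{ml}K_{mr}K_{ml}$), or---closer to what is actually happening---enlarge the variational system to include the auxiliary sector and argue that a perturbation $\delta\s_r(0)$ induces a growing $\delta a_m(t)$ which then feeds back into $\delta\s$. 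The paper's explicit difference tracking and nonnegative-sum bound sidestep this issue entirely.
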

\begin{proof}
See~\Cref{sec:sens_dep} for a proof of the above theorem. 
\end{proof}
\begin{figure}
\centering
\includegraphics[width=.6\linewidth]{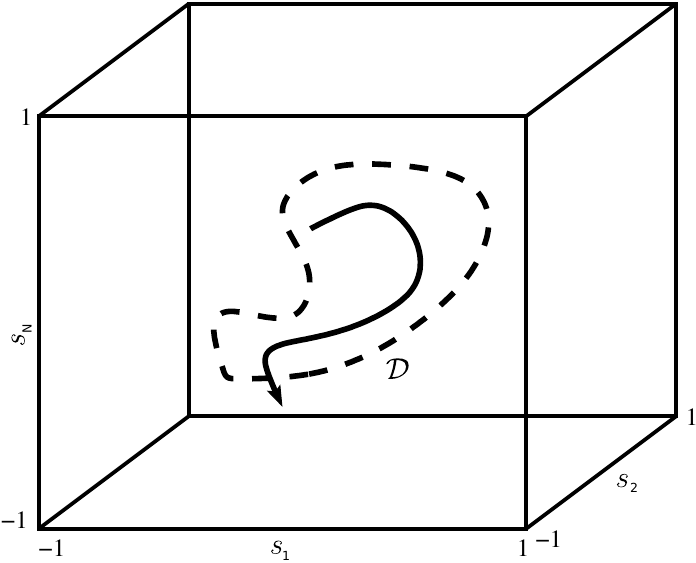}
\caption{Given any \UNSAT~instance of Eqn.~\ref{eqn:2lin}, any trajectory starting inside an arbitrary domain $\D\subseteq\C$ will eventually escape it.}
\label{fig:escaping}
\end{figure}
\begin{theorem}
All unsatisfiable instances of 2-Lin-$k$ system of equations (described by Eqn.~\ref{eqn:2lin}) result in ergodic dynamical systems. The emergence of ergodicity is a consequence of the special property of the dynamical system such that no trajectory can remain indefinitely constrained within any closed subset of $\C$.
\end{theorem}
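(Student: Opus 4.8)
The plan is to work not with the flow Eqn.~\ref{eqn:dyn} on the non‑compact space $\C\times(0,\infty)^{M}$ but with its natural compactification. Write $\aux = S\,\hat\aux$ with $S=\sum_{m}a_{m}>0$ and $\hat\aux$ in the simplex $\Delta^{M-1}$, set $u=1/S$, and reparametrize time by $d\tau = S\,dt$. In the variables $(\s,\hat\aux,u)$ the rescaled field is bounded --- $\s' = -\sum_{m}\hat a_{m}\nabla_{\s}(K_{m}^{2})$, $\hat a_{m}' = u\,\hat a_{m}\bigl(K_{m}^{\alpha}-\textstyle\sum_{m'}\hat a_{m'}K_{m'}^{\alpha}\bigr)$, $u' = -u^{2}\sum_{m'}\hat a_{m'}K_{m'}^{\alpha}$ --- and extends to a complete flow $\Phi_{\tau}$ on the compact set $\C\times\Delta^{M-1}\times[0,U_{0}]$, where $U_{0}=1/S(0)$. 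Ergodicity is then established in three steps: (i) $\Phi_{\tau}$ has no proper closed forward‑invariant subset (the escaping‑domain property of \Cref{fig:escaping}, now including the boundary face $\{u=0\}$ ``at infinity''); (ii) Krylov--Bogolyubov supplies a $\Phi_{\tau}$‑invariant Borel probability measure $\mu$, which by (i) has full support; (iii) the sensitive dependence of \Cref{thm: lyapunov} upgrades $\mu$ to an ergodic measure, so that time and space averages coincide.

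For step (i), first observe that in the interior $\{u>0\}$ there are no equilibria: one would need $a_{m}K_{m}^{\alpha}=0$ for every $m$, hence $K_{m}(\s)=0$ for every $m$; rounding each pinned coordinate then yields a boolean assignment satisfying every clause, contradicting \UNSAT. Consequently $\rho := \min_{\s\in\C}\max_{m}K_{m}(\s)>0$, so along \emph{any} trajectory some clause $m_{0}$ obeys $K_{m_{0}}(\s(\tau))\ge\rho$ on a set of infinite measure; thus $a_{m_{0}}\to\infty$, i.e.\ $u\to 0$, and every forward orbit accumulates on $\{u=0\}$. On that face $\Phi_{\tau}$ is a slow--fast (replicator‑type) system: $\hat\aux$ drifts slowly while $\s$ performs gradient descent of the frozen‑weight energy $V_{\hat\aux}(\s)=\sum_{m}\hat a_{m}K_{m}^{2}$ toward a minimizer; because \UNSAT\ keeps that minimum strictly positive, the drift of $\hat\aux$ never stops, producing the ``relay'' in which whichever clause currently dominates drags $\s$ toward its zero set $\{K_{m}=0\}$ and then cedes control to a conflicting clause. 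One must show this relay cannot be confined to any proper closed set --- equivalently, that the union of the varieties $\{K_{m}=0\}$ together with the connecting gradient trajectories is not contained in a proper closed invariant set, and, crucially, that there are no attracting equilibria or invariant sets \emph{at infinity} (points $(\s_{\infty},\hat\aux_{\infty},0)$ where all active clauses happen to have equal $K_{m}$). Ruling these out is the principal obstacle, and it is exactly here that both hypotheses are used together: \UNSAT\ to keep $V_{\hat\aux}$ bounded away from zero, and the amplification mechanism of \Cref{thm: lyapunov} to force any such candidate trap to be unstable in the auxiliary directions.

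For step (ii), $\C\times\Delta^{M-1}\times[0,U_{0}]$ is compact and $\Phi_{\tau}$ continuous, so extracting a weak‑$*$ limit of $\frac{1}{T}\int_{0}^{T}(\Phi_{\tau})_{*}\delta_{x_{0}}\,d\tau$ produces an invariant probability measure $\mu$. Since the support of an invariant measure of a flow is closed and forward‑invariant, step (i) forces $\operatorname{supp}\mu$ to be the whole space; in particular the $\s$‑marginal of $\mu$ is supported on all of $\C$.

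For step (iii) it remains to exclude a decomposition of $\mu$ into mutually singular invariant components. Any invariant set $A$ with $0<\mu(A)<1$ would, applying the escaping‑domain property to the closure of its density points, have to be dense; but by \Cref{thm: lyapunov} the exponential growth of the $a_{m}$ stretches the separation between any two nearby trajectories until they are driven into distinct regions of $\C$, making $\Phi_{\tau}$ mixing on the compact space, which is incompatible with the existence of two disjoint invariant sets of positive measure. Hence $\mu$ is ergodic; transporting back through the time reparametrization, time averages of continuous observables of Eqn.~\ref{eqn:dyn} converge to $\int\,d\mu$, which is the assertion of the theorem. The weight that the $\s$‑marginal of $\mu$ places near optimal assignments, and its dependence on the alphabet size $k$ --- the quantitative ``hardness score'' --- is then studied numerically in the sections that follow.
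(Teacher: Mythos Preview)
Your framework---compactifying via $(\s,\hat\aux,u)$ with $u=1/\sum_m a_m$, reparametrizing time, and invoking Krylov--Bogolyubov---is substantially more elaborate than what the paper does. The paper's argument stays entirely in the original $\s$-coordinates: by the escape property (cited from \cite{ercsey2011optimization}), any trajectory must leave every proper closed $\D\subset\C$; taking $\D=\C\setminus\mathcal{B}_{\mathcal{P}}$ for an arbitrary ball $\mathcal{B}_{\mathcal{P}}$ around $\mathcal{P}\in\C$ shows the orbit enters $\mathcal{B}_{\mathcal{P}}$, and by the same token leaves it again, so every orbit visits every neighborhood infinitely often. The invariant measure is then \emph{defined} as the limiting fraction of time spent in each ball, and ergodicity is asserted directly from that. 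There is no compactification, no analysis of a boundary face at infinity, and no separate step upgrading full support to ergodicity via mixing.

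Your step (iii), however, contains a genuine gap: positive Lyapunov exponents (the sensitive dependence of \Cref{thm: lyapunov}) do not by themselves imply mixing or ergodicity. A disjoint union of two hyperbolic attractors has positive exponents everywhere yet is obviously not ergodic; exponential divergence of nearby orbits says nothing about whether \emph{all} initial conditions see the same statistics. To pass from ``$\mu$ has full support'' to ``$\mu$ is ergodic'' you would need something like a Hopf argument via stable/unstable foliations, or a direct proof that two ergodic components cannot both be dense---which amounts to proving topological transitivity together with some regularity of the measure, and is not a consequence of \Cref{thm: lyapunov}. You also flag step (i)---ruling out invariant sets on the $\{u=0\}$ face---as the ``principal obstacle'' but do not actually resolve it; invoking \Cref{thm: lyapunov} there suffers from the same limitation. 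The paper sidesteps both issues by never separating existence of the invariant measure from its ergodicity, but if you are going to set up the measure-theoretic machinery properly, these steps need real arguments.
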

\begin{proof}
The core of our argument rests on the fact that a trajectory will eventually escape any arbitrary closed domain $\D$ (see Fig.~\ref{fig:escaping}) within $\C$~\cite{ercsey2011optimization}. As demonstrated in~\cite{ercsey2011optimization}, a domain $\mathcal{D}$ that contains a non-solution fixed point cannot capture the dynamics of a trajectory for any formula. Note that \emph{any} unsatisfied clause leads to an exponential growth of the corresponding $a_m$ coefficient and has the following form,
\begin{align}
    \frac{d\s_l}{dt} = \displaystyle\sum_{m=1}^{M}2a_{m}(0)c_{ml}K_{ml}K_{m}e^{\int_{0}^{t} K_{m}dt}
\end{align}
Thus, if a trajectory remains bounded in $\mathcal{D}$, some constraints will remain unsatisfied (by virtue of not containing solutions), leading to exponential growth of the $a_m$ values. Even if the exponentials cancel \emph{exactly}, any small computational perturbation (from round-off or numerical errors), will blow up, leading the trajectory to escape from $\mathcal{D}$. For more details see~\cite{ercsey2011optimization}. Now consider an infinitesimal ball $\mathcal{B}_{\mathcal{P}}$ around any point $\mathcal{P}\in \mathcal{C}$. Then, as shown, a trajectory from \emph{any} initial condition \emph{must} exit $\mathcal{C}\setminus\mathcal{B}_{\mathcal{P}}$ (enter $\mathcal{B}_{\mathcal{P}}$). Using the same argument, trajectory must also eventually exit $\mathcal{B}_{\mathcal{P}}$. Thus, the trajectory must enter the neighborhood of all points in $\mathcal{C}$ an \emph{infinite number of times}. The invariant measure is given by, $\mu(\mathcal{B}_{\mathcal{P}}) = \mathcal{F}(\mathcal{B}_{\mathcal{P}})$, where $\mathcal{F}(\mathcal{B}_{\mathcal{P}})$ is the fraction of time the trajectory spends in $\mathcal{B}_{\mathcal{P}}$ (time average). The dynamical system is consequently ergodic with an invariant measure $\mu$. 
\end{proof}
\begin{remark}
Combining the implications of the two above theorems (positive Lyapunov exponents and ergodicity), we note that the dynamical system is in fact a $K$-system~\cite{frigg2011ergodic,katok1995introduction}. 
\end{remark}
It is interesting to note that if the invariant measure places fundamental limitations on the \emph{best} possible algorithm for solving $2$-Lin-$k$ systems, it also places computational limits on simulations of physical systems, i.e., the ability to efficiently compute solutions to ergodic systems with increasing dimensionality and stiffness would imply that the UGC is not true.
\section{Results}
\subsection{Generating UGC instances}
For computational tests, we now describe our technique for constructing instances of 2-Lin-$k$ equations with a user-prescribed fraction of satisfiable equations. As depicted in Fig.~\ref{fig:eqn_gen}, our approach can be visualized using an $n$-sided polygon whose vertices or nodes denote the variables in the 2-Lin-$k$ system and the lines that connect the nodes depict the equations that relate one variable to another (of the form Eqn.~\ref{eqn:2lin}). We start by constructing the equations that relate $\x_{i}$ to $\x_{i+1}$ and $\x_{1}$ to $\x_{n}$. It is straightforward to check that out of these $n$ equations, $n-1$ must always be satisfied. We then start adding the equations that correspond to `diagonals' of the polygon. One can pick values for $\mathbf{b}_{l}$  such that the additional equations either add to the $\texttt{sat}$ or \UNSAT~lists, thereby, providing the user the ability to construct a UGC instance with a predefined value for $\epsilon$. Note here, for any specific equation, one can find an assignment to satisfy it. However, it will lead to the constraint violation of another equation on the list, leading to net zero gain in the value of $\epsilon$.
\begin{figure}
\centering
\begin{tikzpicture}
  [inner sep=1mm,
  minicirc/.style={circle,draw=black!10,fill=black!10,thick}]

  \node (circ1) at ( 60:\Rpic) [minicirc] {$\x_{1}$};
  \node (circ2) at (120:\Rpic) [minicirc] {$\x_{2}$};
  \node (circ3) at (180:\Rpic) [minicirc] {$\x_{3}$};
  \node (circ4) at (240:\Rpic) [minicirc] {$\x_{4}$};
  \node (circ5) at (300:\Rpic) [minicirc] {$\x_{5}$};
  \node (circ6) at (360:\Rpic) [minicirc] {$\x_{6}$};

  \draw [ultra thick] (circ1) to (circ2) to (circ3)
  to (circ4) to (circ5) to (circ6);
  \draw [ultra thick,dashed] (circ1) to (circ6);
  \draw [ultra thick,dashed] (circ2) to (circ5);
\end{tikzpicture}
\caption{Method for generating instances of the unique games with user prescribed number of equations. Nodes are variables in the 2-Lin-$k$ system, and lines denote the equations that relate one variable to another. Every solid line corresponds to a satisfiable equation, whereas, every dotted line corresponds to an unsatisfiable equation. Note that one can force the ``unsat'' equation to be satisfied, at the expense of converting another ``sat'' equation to ``unsat''.}
\label{fig:eqn_gen}
\end{figure}
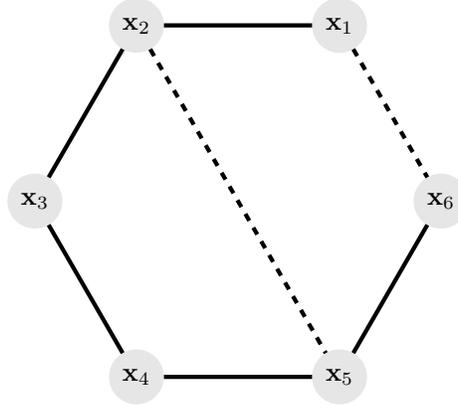
\subsection{Computational results}
To analyze the unique games conjecture using the dynamical system above, we generate random instances of $2$-Lin-$k$ equations and vary the alphabet $k$ and system size given by $n_{\x}$. For prescribed values of $n_{\x}$ and $k$, we generate instances such that the maximum number of satisfiable equations (given by $1-\epsilon$) is known a priori. We then generate the corresponding dynamical system and integrate it numerically starting from a random initial condition. Note that since these instances are unsatisfiable, the corresponding trajectories are ergodic in $\mathcal{C}$. 

For $2$-Lin-$k$ instances generated using our approach, since we a priori know $1-\epsilon$, we compute the percentage of time the trajectories spend in the vicinity of assignments that satisfy \emph{at least} any $\delta$ fraction of the equations, and study the impact of varying $k$. In Fig.~\ref{fig:transition}, we present the \emph{computational scaling} of the time spent by the trajectories in the ``vicinity'' of $\delta$ equations satisfied, denoted by $\mathcal{Y(\delta)}$, as a function of $k$. Here, we consider the vicinity of a particular assignment to be the $L_1$-ball of radius $0.1$ around it. In other words, if a trajectory enters this ball, we assume it is spending time satisfying the associated $\delta$ fraction of equations. 
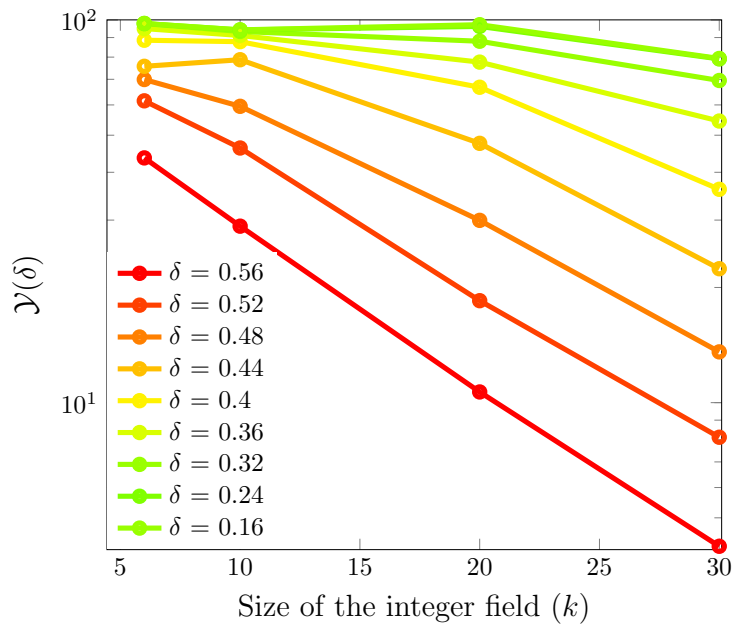
\begin{figure}[thbp]
\centering
\resizebox{0.6\textwidth}{!}{
%
%
\definecolor{mycolor1}{rgb}{1.00000,0.25098,0.00000}%
\definecolor{mycolor2}{rgb}{1.00000,0.50196,0.00000}%
\definecolor{mycolor3}{rgb}{1.00000,0.74902,0.00000}%
\definecolor{mycolor4}{rgb}{1.00000,0.94902,0.00000}%
\definecolor{mycolor5}{rgb}{0.85098,1.00000,0.00000}%
\definecolor{mycolor6}{rgb}{0.50196,1.00000,0.00000}%
\begin{tikzpicture}

\begin{axis}[%
scale only axis,
xmin=4.44081361618845,
xmax=30.1,
xtick={5,10,15,20,25,30},
label style = {font=\large},
xlabel={Size of the integer field ($k$)},
ymode=log,
ymin=4.12881255989588,
ymax=100.1,
yminorticks=true,
ylabel style={align=center},
ylabel={$\mathcal{Y(\delta)}$},
axis background/.style={fill=white},
legend style={at={(0,0)}, anchor=south west, legend cell align=left, align=left, draw=none}
]
\addplot [color=red, line width=2.0pt, mark=o, mark options={solid, red}]
  table[row sep=crcr]{%
6	43.62\\
10	28.92290249\\
20	10.66439909\\
30	4.213151927\\
};
\addlegendentry{$\delta\text{ = 0.56}$}

\addplot [color=mycolor1, line width=2.0pt, mark=o, mark options={solid, mycolor1}]
  table[row sep=crcr]{%
6	61.5\\
10	46.29931973\\
20	18.47165533\\
30	8.120181406\\
};
\addlegendentry{$\delta\text{ = 0.52}$}

\addplot [color=mycolor2, line width=2.0pt, mark=o, mark options={solid, mycolor2}]
  table[row sep=crcr]{%
6	69.92\\
10	59.50340136\\
20	29.95238095\\
30	13.57369615\\
};
\addlegendentry{$\delta\text{ = 0.48}$}

\addplot [color=mycolor3, line width=2.0pt, mark=o, mark options={solid, mycolor3}]
  table[row sep=crcr]{%
6	75.7\\
10	78.77324263\\
20	47.61451247\\
30	22.39002268\\
};
\addlegendentry{$\delta\text{ = 0.44}$}

\addplot [color=mycolor4, line width=2.0pt, mark=o, mark options={solid, mycolor4}]
  table[row sep=crcr]{%
6	88.56\\
10	87.79365079\\
20	66.73469388\\
30	36.11337868\\
};
\addlegendentry{$\delta\text{ = 0.4}$}

\addplot [color=mycolor5, line width=2.0pt, mark=o, mark options={solid, mycolor5}]
  table[row sep=crcr]{%
6	94.98\\
10	91.138322\\
20	77.63718821\\
30	54.45804989\\
};
\addlegendentry{$\delta\text{ = 0.36}$}

\addplot [color=green!20!lime, line width=2.0pt, mark=o, mark options={solid, green!20!lime}]
  table[row sep=crcr]{%
6	97.92\\
10	93.34693878\\
20	88.02040816\\
30	69.50793651\\
};
\addlegendentry{$\delta\text{ = 0.32}$}

\addplot [color=mycolor6, line width=2.0pt, mark=o, mark options={solid, mycolor6}]
  table[row sep=crcr]{%
6	97.92\\
10	94.2244898\\
20	96.23356009\\
30	79.138322\\
};
\addlegendentry{$\delta\text{ = 0.24}$}

\addplot [color=green!20!lime, line width=2.0pt, mark=o, mark options={solid, green!20!lime}]
  table[row sep=crcr]{%
6	97.92\\
10	94.2675737\\
20	97.24489796\\
30	79.36507937\\
};
\addlegendentry{$\delta\text{ = 0.16}$}

\end{axis}
\end{tikzpicture}
\caption{Percentage of time spent by trajectories in the vicinity of assignments that satisfy at least $\delta$ fraction of equations versus the alphabet size $k$ for an instance with $\epsilon=0.4$. Note that the decay rate captures the hardness of the problem and the transition in scaling the assertion of the unique games conjecture.}
\label{fig:transition}
\end{figure}

\emph{Our numerically computed scaling is found to be consistent with the unique games conjecture as a function of $k$ and the gap between $1-\epsilon$ and $\delta$}.  In particular, we find that as the size of the integer field $k$ is increased, the curves transition from a polynomial/subexponential scaling (in green and yellow) to exponential scaling (in red). Recall that if the UGC is true, as $k$ increases, it should become harder to separate cases where the gap between $\delta$ and $1-\epsilon$ (the value gap) is larger. The less time trajectories spend satisfying a $\delta$ fraction of equations, the \emph{harder} the problem. 


Let the computational complexity of the UGC for different values of $(n_{\x},\delta,\epsilon,k)$ be captured by the scaling of the best possible classical algorithms for the problem, given by, $O(\text{exp}(n_{\x}^{f(\delta,\epsilon,k)}))$. If $f(\delta,\epsilon,k) < 1$, the algorithm is subexponential, and if $f(\delta,\epsilon,k)\geq 1$, it is exponential. We now set $\epsilon=0.4$ and use the data for $\mathcal{Y(\delta)}$, shown in Fig.~\ref{fig:transition} to estimate $f(\delta, k)$, shown in Fig.~\ref{fig:cval_transition}. One can see that $f(\delta,k)$ transitions from subexponential to expontential scaling values, \emph{consistent with statement of the UGC}. More details of the analysis are available in~\Cref{sec:numer}.
\begin{figure}
\centering
\resizebox{0.6\textwidth}{!}{
\input{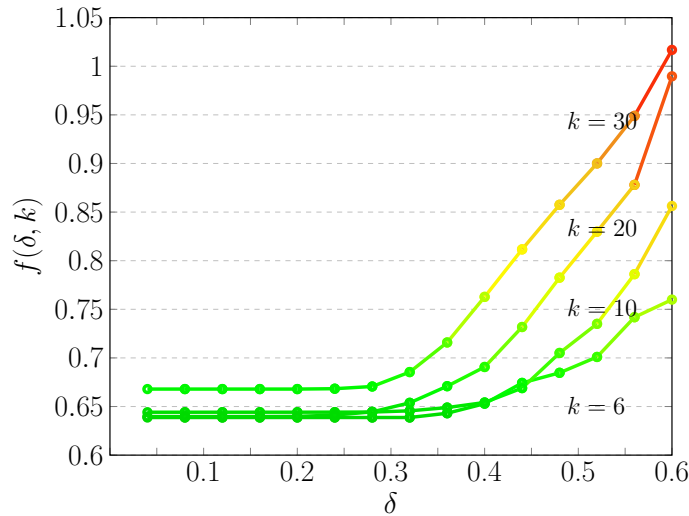}
}
\caption{Transition of the scaling exponent from Fig.~\ref{fig:transition} as a function of the size of the problem ($k$ values) and $\delta$. The colors depict the value of $f(\delta,k)$. Green and yellow colors correspond to subexponential values and red corresponds to exponential values ($f(\delta,k)\geq 1$). Results for $\epsilon=0.4$.}
\label{fig:cval_transition}
\end{figure}

One can define the UGC using completeness and soundness of the underlying $2$-Lin-$k$ instance~\cite{khot2002power}. Here, one must distinguish between the completeness case where there exists an assignment that satisfies at least a $c$ fraction of equations and the soundness case where all assignments satisfy less than an $s$ fraction of equations. The larger the gap between the $c$ and $s$ values, denoted by $\text{gap}(c,s)$, the easier the problem. Note that this definition is equivalent to the UGC statement used in our paper~\cite{trevisan2005approximation}. 

In~\cite{barak2018halfway}, Barak hypothesized a hardness picture based on the $(c,s)$ values where, as the gap decreases, the unique games transition from a subexponential to exponential regime (see~\Cref{sec:hardness}). We analyze the time spent by the trajectories in the vicinity of the optimal assignment as a function of $(1-\epsilon,\delta)$ (equivalent to the $(c,s)$ space) and find that the resulting plot (in Fig.~\ref{fig:bb}) reproduces the hypothesized image.  
A key consequence of our results is that \emph{if} these dynamical system embeddings of constraint satisfaction problems place fundamental limitations on the capabilities of deterministic Turing machines, then the \emph{unique games conjecture is indeed true}. Alternatively, if the UGC is proved to be true, then these \emph{dynamical systems embeddings capture fundamental limitations on algorithm construction for constraint satisfaction problems}.
\begin{figure}
\centering
\resizebox{0.6\textwidth}{!}{
\input{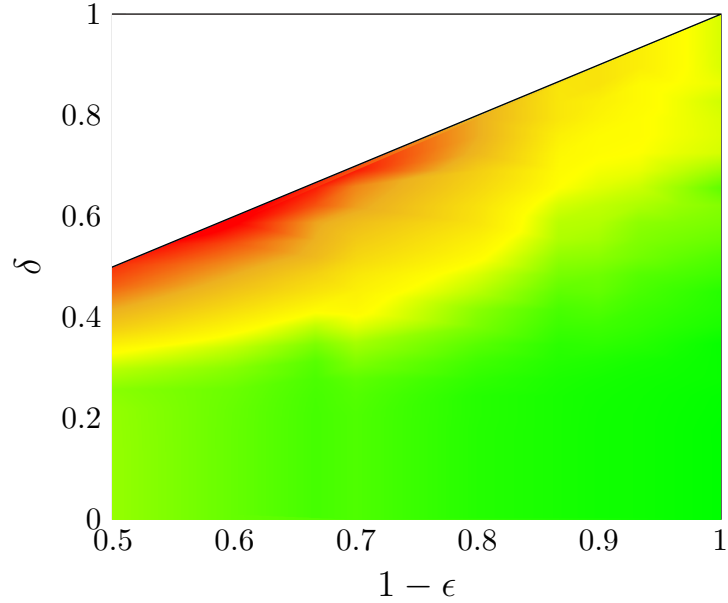}}
\caption{Transition of hardness as a function of $\delta$ and $\epsilon$. Color depicts the exponent. Red areas have exponential scaling. A similar picture was hypothesized in~\cite{barak2018halfway}. Results for $k=30$.}
\label{fig:bb}
\end{figure}

\section{Conclusions and future work}
The unique games conjecture has had exceptional impact on the field of computational complexity. By providing a novel means to characterize the hardness of computational problems, it has enabled the investigation of deep questions in complexity theory. In this work, we consider a prototypical definition of the UGC, namely, a linear system of two variables on $\Zk$. We then embed this linear system into a family of dynamical systems. It is shown that only valid solutions of the system of linear equations correspond to equilibria of the constructed dynamical system. Moreover, the family of dynamical systems does not admit any spurious attractors. We then prove that these dynamical systems are ergodic and numerically show that the weight of the invariant measure in the vicinity of various assignments depends on the value gap of the UGC and \emph{displays a transition that is consistent with the statement of the conjecture}.

In future work, we plan to provide theoretical bounds on the decay rates of the weights of the invariant measures. Moreover, given the deep connections between Turing machines~\cite{moore1990unpredictability} and the universality of dynamical systems~\cite{tao_2017}, we plan to study the universality of the dynamical systems in  Eqn.~\ref{eqn:dyn}. 
We conclude our work with three conjectures whose proofs will shed important light on  \emph{fundamental limits} on computation and its relationship with dynamical systems theory.
\begin{conjecture}
There always exist one-to-one maps from constraint satisfaction or optimization problems to dynamical systems such that the onset of hardness is captured by the underlying invariant measure. 
\end{conjecture}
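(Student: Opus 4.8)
The plan is to reduce the general assertion to the concrete $2$-Lin-$k$ construction already in hand, and then to make the phrase ``onset of hardness is captured by the underlying invariant measure'' precise enough to be a theorem. First I would dispose of the existence and one-to-one part: any constraint satisfaction problem over a finite alphabet reduces, by the standard Cook--Levin and Tseitin gadget transformations, to a CNF-SAT instance, and any combinatorial optimization problem can be recast as a family of gap (promise) problems and hence, using the equivalence between gap problems and $2$-Lin-$k$/$K$-SAT noted in the excerpt, as a family of instances of the form of Eqn.~\ref{eqn:2lin}; continuous optimization problems are handled by first discretizing to the desired precision. One must use \emph{gap-preserving} reductions here — available via the PCP machinery for the problems of interest — so that the value gap, and hence the hardness structure, is transported faithfully. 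Composing any such reduction with the embedding of Eqn.~\ref{eqn:dyn} then produces, for every problem in the class, a family of dynamical systems on $\C=[-1,1]^N$ for which properties (a)--(c) of the excerpt hold — these are proved clause-by-clause, so they survive any clause-level reduction — and hence a one-to-one correspondence between optimal assignments and stable equilibria. Whenever the instance is \UNSAT~at its prescribed threshold the ergodicity theorem (stated above) applies, and the invariant measure $\mu$ is well defined on the whole family.

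Second, I would fix the formalization suggested by the scaling experiments: let $f(\delta,\epsilon,k)$ be the exponent in the running time $O(\exp(n_{\x}^{f}))$ of the best classical algorithm, and let the \emph{measure-theoretic exponent} be the analogous quantity governing the decay of $\mu$ restricted to a fixed-radius $L_{1}$-ball about the optimal assignments as $n_{\x}\to\infty$. The statement to be established is that these two exponents agree up to the polynomial-versus-(sub)exponential trichotomy, uniformly in $(\delta,\epsilon,k)$. The forward implication — a tractable problem forces slow decay of $\mu$ — I would obtain by turning an efficient algorithm into a certificate neighbourhood that the ergodic flow must re-enter with non-negligible frequency; the technical engine here is a \emph{quantitative} version of the escape argument behind Fig.~\ref{fig:escaping}, upgrading ``every domain $\mathcal{D}$ is eventually exited'' into an explicit bound on the dwell time in $\mathcal{D}$ in terms of the growth exponent $\alpha$ and the number of clauses a near-optimal assignment can leave unsatisfied.

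The main obstacle is the converse: showing that fast decay of $\mu$ near the optima genuinely \emph{forces} computational hardness, i.e.\ that no deterministic Turing machine can outperform the embedding. This is exactly the content of the later conjectures in the paper and is entangled with the UGC itself, so an unconditional proof is not realistic without a major breakthrough. What I would actually aim for is a conditional theorem: assuming that reading off a $\delta$-satisfying assignment by simulating the flow of Eqn.~\ref{eqn:dyn} to the accuracy required is itself no easier than the underlying combinatorial problem — a hypothesis one can hope to attack using the universality results for flows cited in the excerpt — the trichotomy follows from the forward direction together with a pigeonhole/averaging argument on the ergodic time averages. Thus the realistic deliverables are (i) the general existence and one-to-one correspondence, provable now; (ii) the forward implication with explicit decay-rate bounds; and (iii) the converse conditioned on a precisely isolated simulation-hardness hypothesis, the removal of which would be tantamount to resolving the UGC.
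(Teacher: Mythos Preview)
The statement you are attempting to prove is labeled \textbf{Conjecture} in the paper, and it is presented as such in the concluding section alongside two companion conjectures. The paper offers no proof; it explicitly proposes these as open problems ``whose proofs will shed important light on fundamental limits on computation and its relationship with dynamical systems theory.'' There is therefore no paper proof to compare your proposal against.

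Your write-up is not a proof but a research programme, and to your credit you recognize this: you concede that the converse direction (fast decay of $\mu$ forces computational hardness) ``is entangled with the UGC itself, so an unconditional proof is not realistic without a major breakthrough,'' and you downgrade the deliverable to a conditional statement resting on a simulation-hardness hypothesis. That is an honest assessment, but it means your proposal does not establish the conjecture. Even the forward direction you sketch --- turning an efficient algorithm into a lower bound on the dwell time near optima --- is not obviously tractable: an algorithm that solves the combinatorial problem need bear no relation to the flow of Eqn.~\ref{eqn:dyn}, so there is no a priori mechanism by which algorithmic efficiency constrains the invariant measure of this particular dynamical system. The quantitative escape-time bound you invoke is also not proved anywhere in the paper; the escape argument there is purely qualitative.

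In short, the paper states this as a conjecture supported by numerical evidence, and your proposal is a reasonable outline of what a proof attempt might look like, correctly identifying that the hard direction is essentially equivalent to major open problems in complexity theory. Neither you nor the paper proves the statement.
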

\begin{conjecture}
For the UGC, as the alphabet size $k$ increases, the weight of the invariant measure in the vicinity of optimal or feasible assignment, transitions from subexponential to exponential scaling.    
\end{conjecture}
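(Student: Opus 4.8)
The plan is to reduce the conjectured transition in the scaling of the invariant measure to a \emph{quantitative} version of the escape argument behind the ergodicity theorem, combined with a recurrence-time estimate controlled by the combinatorial structure of near-optimal labelings of the underlying $2$-Lin-$k$ instance. Fix an \UNSAT~instance with optimum $1-\epsilon$ and a target fraction $\delta$, and let $\mathcal{B}_\delta\subseteq\C$ be the union of fixed-radius $L_1$-balls around every assignment satisfying at least a $\delta$ fraction of the equations. By ergodicity, $\mathcal{Y}(\delta)=\mu(\mathcal{B}_\delta)=\lim_{T\to\infty}\tfrac1T\int_0^T\mathbf{1}[\s(t)\in\mathcal{B}_\delta]\,dt$, and a renewal (Kac-type) identity for the first return of the ergodic flow to $\mathcal{B}_\delta$ gives $\mu(\mathcal{B}_\delta)=\bar\tau_{\mathrm{stay}}/\bar\tau_{\mathrm{ret}}$, where $\bar\tau_{\mathrm{stay}}$ is the mean sojourn time per visit and $\bar\tau_{\mathrm{ret}}$ is the mean time between consecutive entries. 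The entire problem is then to bound these two time scales as functions of $n_{\x}$ and $k$.

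First I would bound $\bar\tau_{\mathrm{stay}}$ using the mechanism of the ergodicity proof. Once the ball radius is small enough, an assignment satisfying only a $\delta$ fraction forces at least $m^\star\approx(1-\delta)n_{\text{eq}}$ CNF clauses to be violated throughout $\mathcal{B}_\delta$; this is the first place $k$ enters, since the DNF-to-CNF encoding assigns $k$ spin variables per $\x_i$, so the margin $1-c_{mp}\s_p$ on a violated clause — and hence the lower bound $\kappa(k)=\min K_m>0$ valid inside the ball — shrinks with $k$. The dynamics then forces the $a_m$ of those $m^\star$ clauses to grow exponentially, $|\nabla_\s\V|$ grows without bound, and the gradient flow is expelled; this yields $\bar\tau_{\mathrm{stay}}=O\!\big(\mathrm{poly}(N)\big)$ modulated only by $\kappa(k)$ and $m^\star$ — polynomially bounded in all parameters, with no exponential in $n_{\x}$. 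Consequently any transition in $\mathcal{Y}(\delta)$ must originate entirely from $\bar\tau_{\mathrm{ret}}$.

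The heart of the argument is therefore a lower bound on $\bar\tau_{\mathrm{ret}}$ that is polynomial in $n_{\x}$ in the easy regime and exponential in the hard one. I would try to show that $\bar\tau_{\mathrm{ret}}$ is proportional to the reciprocal $\mu$-measure of a ``funnel'' — the set of states from which the gradient flow on $\V$ descends into $\mathcal{B}_\delta$ before the auxiliary variables redirect it — and that the funnel's measure is dictated by how the $\delta$-good labelings of the constraint graph are arranged. When there is genuine slack ($\delta$ well below $1-\epsilon$), local repairs of a labeling remain $\delta$-good, the funnel is ``wide,'' and one should be able to exhibit a polynomially large set of initial conditions feeding it, invoking the fact that descent on $\V$ with modest $\aux$ actually locates such labelings. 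When the value gap closes, one imports the UGC integrality-gap constructions (Khot--Vishnoi / Raghavendra-type instances): the $\delta$-good assignments become isolated, pairwise $\exp(\Omega(n_{\x}))$-far, the energy landscape between them is flat, and for large alphabet there are $k^{\Omega(n_{\x})}$ competing near-miss basins among which the ergodic flow disperses its time, forcing $\bar\tau_{\mathrm{ret}}=\exp(\Omega(n_{\x}))$. The widening of the hard band with $k$ is precisely the statement that the $\delta$-threshold at which ``isolated and fragile'' sets in increases with the alphabet size.

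The main obstacle is making the $\bar\tau_{\mathrm{ret}}$ step rigorous: $\mu$ is only defined implicitly, as a time-average, the flow is not Lebesgue-volume-preserving (its $\s$-divergence is $-\Delta_\s\V\neq0$), and there is no closed form for the funnel measure into which the combinatorics can be inserted. A realistic intermediate target is a \emph{conditional} theorem: assuming the UGC (or the first conjecture above, or the announced further conjecture linking invariant measures to Turing-machine complexity), derive the scaling transition; or, conversely, prove that a genuine phase transition in the scaling exponent $f(\delta,\epsilon,k)$ across the value $1$ implies the UGC. A second, softer difficulty is sharpness — ruling out a smooth crossover in favor of an actual transition — which will likely require a matching hardness lower bound, consistent with the paper's own stance that this conjecture and the UGC stand or fall together.
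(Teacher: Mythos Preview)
The paper does not prove this statement: it is explicitly presented as one of three concluding \emph{conjectures}, and the only support offered is numerical (the decay curves in Fig.~\ref{fig:transition}, the fitted exponents $f(\delta,k)$ in Fig.~\ref{fig:cval_transition}, and the extrapolated hardness landscape in Fig.~\ref{fig:bb}). There is therefore no ``paper's own proof'' to match; you are attempting to prove something the authors leave open.

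Your proposal is a coherent research outline, and you are candid that the return-time step is the crux and that you do not know how to make it rigorous. Let me name two gaps more sharply than you do. First, the Kac-type identity $\mu(\mathcal{B}_\delta)=\bar\tau_{\mathrm{stay}}/\bar\tau_{\mathrm{ret}}$ presupposes a flow that preserves $\mu$ on a compact phase space; here the full state is $(\s,\aux)$ with $\aux$ escaping to infinity, and the ``ergodicity'' established in the paper is really a statement about the $\s$-projection visiting every ball infinitely often, not ergodicity of a measure-preserving flow in the standard sense. You would need to first construct a genuine invariant probability measure (e.g.\ on a suitable compactification or after a time/variable rescaling that tames the $a_m$) before any renewal identity applies. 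Second, your polynomial bound on $\bar\tau_{\mathrm{stay}}$ does not follow from the escape argument: that argument shows a trajectory eventually leaves any ball, but the time to do so depends on the current values of the $a_m$, which carry the entire history of the trajectory and can be arbitrarily large or small at the moment of entry. Without control over the distribution of $\aux$ at entry times --- which is exactly the unknown invariant measure again --- you cannot conclude $\bar\tau_{\mathrm{stay}}=O(\mathrm{poly}(N))$. Your fallback to a conditional theorem (assuming UGC, derive the scaling transition, or vice versa) is the realistic target, and is consistent with the paper's own framing that these conjectures and the UGC ``stand or fall together.''
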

\begin{conjecture}
If the UGC is true, no sequence of operations of a Turing machine can circumvent the scaling properties of these dynamical systems across all problem instances.    
\end{conjecture}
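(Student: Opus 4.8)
The plan is to recast Conjecture 3 as a \emph{two-sided} correspondence between the running time of an optimal deterministic algorithm for the hard $2$-Lin-$k$ families and the decay rate of the invariant measure $\mu$ near the optimal assignments, and then to argue that the UGC forces the two quantities to coincide so that ``circumventing the scaling'' becomes impossible by the very definition of optimality. Fix a family of instances with parameters $(n_{\x},k,\epsilon,\delta)$, let $\mathcal{B}_n\subseteq\C$ be the union of $L_1$-balls of radius $\rho$ about assignments satisfying at least a $\delta$ fraction of the equations (as in Fig.~\ref{fig:transition}), and let $T^\ast(n)$ be the running time of an optimal deterministic Turing machine on this family. The first step is the \emph{upper bound}: consider the ``simulate-the-flow'' algorithm that integrates Eqn.~\ref{eqn:dyn} from a generic initial condition and halts on entry into $\mathcal{B}_n$. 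By Theorem~2 the flow is ergodic on $\C$, so by Kac's recurrence lemma the expected hitting time of $\mathcal{B}_n$ equals $\mu(\mathcal{B}_n)^{-1}$; bounding the bit-cost of integrating the ODE to the precision needed to resolve $\mathcal{B}_n$ (where the positivity of the finite-size Lyapunov exponents from Theorem~1 controls how precision must grow, and where the tunability of $\alpha$ keeps this overhead polynomial) yields $T^\ast(n)\le\mathrm{poly}(n)\cdot\mu(\mathcal{B}_n)^{-1}$, i.e.\ $-\log\mu(\mathcal{B}_n)\ge\log T^\ast(n)-O(\log n)$.

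The second step is the matching \emph{lower bound} $T^\ast(n)\ge\mu(\mathcal{B}_n)^{-1+o(1)}$, and this is where the UGC enters. I would argue by contradiction: suppose some Turing machine $M$ attained a strictly smaller scaling exponent on an infinite subfamily, say $\log T_M(n)\le(1-c)\,(-\log\mu(\mathcal{B}_n))$ for a fixed $c>0$. Two ingredients are then needed. First, a quantitative handle on the decay of $\mu(\mathcal{B}_n)$ — precisely the bound promised for future work and closely tied to Conjecture~2 — which I would attempt via a large-deviation / transfer-operator analysis of the flow restricted to $\C$, establishing $-\log\mu(\mathcal{B}_n)=n_{\x}^{\,f(\delta,\epsilon,k)+o(1)}$ with $f$ positive exactly when the value gap places the instance in the hard regime. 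Second, a gap-calibrated hardness statement: combining the UGC with the long-code/PCP machinery, the one-to-one SAT map of the construction, and the subexponential-algorithm boundary of~\cite{arora2010subexponential}, to show that whenever $f(\delta,\epsilon,k)>0$ every algorithm for this $2$-Lin-$k$ family needs time $2^{\Omega(n_{\x}^{\,f(\delta,\epsilon,k)})}$. Feeding $M$ through the associated reduction contradicts this bound, which forces $-\log\mu(\mathcal{B}_n)=\log T^\ast(n)(1+o(1))$; since $T^\ast$ is by construction the optimum, no Turing machine beats it, which is exactly the assertion of Conjecture~3.

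The genuinely hard part — and the reason this remains a conjecture — is reconciling the two exponents in the second step. The UGC in its standard form only separates polynomial from superpolynomial time for \emph{some} sufficiently large $k$, whereas the claim requires the fine-grained exponent $f(\delta,\epsilon,k)$ extracted from $\mu$ to agree with the true complexity exponent for \emph{each} triple $(k,\epsilon,\delta)$. Closing this gap plausibly requires either a two-way (algorithm $\leftrightarrow$ dynamical system) simulation that is tight at the level of exponents, leaning on the universality of ODE flows in the spirit of~\cite{tao_2017} together with the Turing-completeness of dynamical systems~\cite{moore1990unpredictability}, or a strengthening of the known hardness-amplification results for unique games. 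A secondary obstacle is the passage between the continuous-time, real-valued flow and the discrete bit model: one must verify that the precision needed to faithfully track $\mu(\mathcal{B}_n)$ does not itself inflate the simulation cost beyond the polynomial factors absorbed in the upper bound, for which shrinking the Lyapunov exponents by increasing $\alpha$ in Eqn.~\ref{eqn:dyn} is the main lever. If both of these can be controlled, the argument above turns the heuristic match observed numerically in Figs.~\ref{fig:transition}--\ref{fig:bb} into a genuine equivalence, and Conjecture~3 follows.
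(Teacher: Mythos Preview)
The paper does not prove this statement: it is explicitly presented as Conjecture~3, one of three open conjectures at the end of the paper whose proofs are left as future work. So there is no ``paper's own proof'' to compare against, and your proposal should be read as a roadmap toward a result the authors themselves do not claim to have.

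As a roadmap it is reasonable in spirit, but several steps would not go through as written. First, the upper bound via Kac's lemma is problematic: Kac's formula gives the expected \emph{return} time to a set for a \emph{measure-preserving} transformation with respect to its invariant measure, not the hitting time from an arbitrary initial condition; more importantly, the flow in Eqn.~\ref{eqn:dyn} is not measure-preserving on $\C$ (the auxiliary variables $a_m$ blow up, and the paper's own ergodicity argument in Theorem~2 is really a topological-transitivity statement rather than an invariant-measure one), so invoking Kac here needs substantial justification. Second, the claim that tuning $\alpha$ keeps the numerical-integration overhead polynomial is optimistic: any strictly positive Lyapunov exponent forces precision to grow exponentially in the simulated time, and Remark~\ref{remark:fsle} only shows the exponent can be made small, not zero. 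Hence the factor you absorb as $\mathrm{poly}(n)$ may itself be $\exp(\Theta(\mu(\mathcal{B}_n)^{-1}))$, destroying the upper bound.

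For the lower bound, you correctly identify that the argument is circular as stated: it presupposes both Conjecture~2 (the quantitative decay $-\log\mu(\mathcal{B}_n)=n_{\x}^{f(\delta,\epsilon,k)+o(1)}$) and a fine-grained time lower bound of the form $2^{\Omega(n_{\x}^{f})}$, which is strictly stronger than what the UGC asserts. The UGC only rules out polynomial-time algorithms for some sufficiently large $k$; it says nothing about matching the specific exponent $f(\delta,\epsilon,k)$ that you extract from the invariant measure. You flag this yourself, and that honesty is appropriate --- but it means what you have is a plausible program, not a proof, which is consistent with the paper's own treatment of the statement as a conjecture.
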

Interesting extensions include the use of the dynamical systems to design efficient algorithms and compute boundaries of efficient algorithm construction for \emph{any} optimization or feasibility problem.

\section*{Acknowledgements}
This material is based upon work supported by the U.S. Navy / SPAWAR Systems Center Pacific under Contract No. N66001-18-C-4031. The authors thank M{\'a}ria Ercsey-Ravasz and Zolt{\'a}n Toroczkai for discussions related the dynamics of the system in the absence of solutions and escaping from arbitrary domains, Luca Trevisan for discussions related to the UGC, and John Guckenheimer for feedback on the dynamical system.

\clearpage
\appendix

\section{Mapping 2-Lin-$k$ instances to Disjunctive  and Conjunctive SAT Normal Forms}
\label{sec:mapping_conv}
As shown in the main text, each equation in the 2-Lin-$k$ system has the following form,
\begin{align}
\x_{i} =\x_{j} + \mathbf{b}_{l}\,\,\text{mod}\,\, k.
\label{aeqn:2lin}
\end{align}
Now, every variable $\x_{i}$, can be represented using spin variables (one spin value for each possible numeric assignment for $\x_{i}$), denoted by 
$S_{\x_{i}}=\{\s_{k(i-1)}, \s_{k(i-1) + 1},\s_{k(i-1) + 2},\hdots, \s_{k(i-1) + k-1}\} $, such that $\s_{l} \in \{-1,1\}$. Here, $\s_{l}=1$ and $\s_{l}=-1$ correspond to \texttt{true} and  \texttt{false} values respectively. Since $\x_{i}$ can only take one value, only one value in $S_{\x_{i}}$ must be $1$ or \texttt{true}. To ensure this property,  we impose additional constraints as shown in Eqn.~2 of the main text. It is easy to check that the mapping from $\x_{i}$ to $S_{\x_{i}}$ introduces $k$ spin variables and $M_{v} = 1 + \frac{k(k-1)}{2}$ clauses.

In the definiton of unique games, the 2-Lin-$k$ equations relating $\x_i$ and $\x_j$ are bijective mappings~\cite{trevisan2012khot} (as shown in Eqn.~\ref{aeqn:2lin}) . In other words, each potential integer assignment for $\x_j$, where $0\leq \x_j\leq k-1$, is mapped to a unique value for $\x_i$. This bijective property plays a critical role in the structure of the resulting Boolean form. It is easy to confirm that  Eqn.~\ref{aeqn:2lin} can be represented by the following Boolean formula,
\begin{align} \label{aeqn:dnf}
((\s_{k(i-1)}&\wedge \s_{k(j-1) + \bl\,\text{mod}\,k}) \vee (\s_{k(i-1) + 1}\wedge \s_{k(j-1) + \bl+1\,\text{mod}\,k}) \hdots\vee (\s_{ki-1}\wedge \s_{k(j-1) + \bl+k-1\,\text{mod}\,k})).
\end{align}
We note here that the assignment of $\s_{l} \in \{-1,1\}$ is equivalent to its corresponding Boolean assignment of $\{0,1\}$ or $\{\texttt{false},\texttt{true}\}$. Moreover, the $\text{mod } k$ operation acts only on the variable portion of the index (i.e., does not include the $k(j-1)$ term above). 

One can convert the above DNF formula (Eqn.~\ref{aeqn:dnf}) into a CNF form that preserves satisfiability~\cite{jackson2004optimality}, and avoids the exponential increase in the number of clauses. For every equation $q \in \{1, 2, \dots, n_{eq}\}$,  we introduce one additional variable $\z_q$. We then map each $\z_q$ to the spin variables given by, $S_{\z_q}=\{\s_{kn_x + k(q-1)}, \s_{kn_x + k(q-1) + 1}, \allowbreak \hdots, \s_{kn_x + k(q-1) + k-1}\}$, where $\s_l\in\{-1,1\}$. Then, it is easy to show that Eqn.~\ref{aeqn:dnf} is equivalent to the following conjunctive form,
\begin{align*} \label{clause}
&(\s_{kn_x + k(q-1)} \vee \s_{kn_x + k(q-1) + 1}, \vee \hdots \vee \s_{kn_x + k(q-1) + k-1}) \wedge\hdots \\
&(\neg \s_{kn_x + k(q-1)} \vee \s_{k(i-1)})  \wedge (\neg \s_{kn_x + k(q-1)} \vee  \s_{k(j-1) + \bl\,\text{mod}\,k}) \wedge\hdots \\
& (\neg \s_{kn_x + k(q-1) +1 } \vee \s_{k(i-1)+1})  \wedge (\neg \s_{kn_x + k(q-1)+1} \vee  \s_{k(j-1) + \bl + 1\, \text{mod}\,k}) \wedge \hdots\\
 & \vdots \\
 &(\neg \s_{kn_x + kq -1 } \vee \s_{ki-1})  \wedge (\neg \s_{kn_x + kq-1} \vee  \s_{k(j-1) + \bl+k-1\, \text{mod}\,k}). \numberthis
\end{align*}
It quickly follows that, for every equation, we add $N_q=k$ additional spin variables along with $M_q = 1+2k$ additional clauses.

Observe that the clause $(\s_{k(i-1)} \vee \s_{k(i-1) + 1}\hdots\vee \s_{ki -1})$, for every $\x_i$ in Eqn.~2 of the main text, is redundant, since one of variables in the clause \emph{must} be $1$ so as to satisfy the set of clauses given in Eqn.~\ref{clause}. Consequently, we ignore this term in our construction. Additionally, with the exception of one clause of size $k$, all other clauses are of size $2$. Thus, for $k=2$ the above mapping reduces to a 2-SAT instance~\cite{khot2002power}.

The number of resulting clauses, denoted by $M$, and variables, denoted by $N$, are given by,
\begin{align*}
M &= n_{\x}(M_v-1) + n_{\text{eq}}M_q, \\
&= n_{\x}\frac{k (k-1)}{2} + n_{\text{eq}}(1+2k),\\
N &= n_{\x} N_v + n_{\text{eq}}N_q, \\
&= (n_{\x}+n_{\text{eq}})k.
\end{align*}
One can then construct a mapping from the above Boolean formulae to a family of dynamical systems, such that, for \emph{any} 2-Lin-$k$ instance, the equilibria of the dynamical system correspond to solutions of the discrete system of equations with no spurious attractors~\cite{ercsey2011optimization}. We refer the reader to Eqn.~4 of the main text for further details. We now show that this dynamical system displays sensitive dependence to initial conditions. 

\section{Sensitive Dependence to Initial Conditions of Dynamical Systems resulting from \UNSAT~instances} \label{sec:sens_dep}
In this section, we prove that the dynamical systems (given by Eqn. 4 in the main text) that result from instances of unsatisfiable $K$-SAT problems or 2-Lin-$k$ equations, display sensitive dependence to initial conditions, a key property of chaotic systems~\cite{strogatz2018nonlinear}. We start by proving a simple lemma that will be important to prove the subsequent theorem.
\begin{lemma}\label{lem:set}
 Given an \UNSAT~instance $\mathcal{I}$ (of a $K$-SAT problem or 2-Lin-$k$ equations), there exists a non-empty subset of spin variables $\mathcal{S}$ such that, for all time $t$, the variables  $\mathcal{\s_{\mathcal{I}}}\in\mathcal{S}$ do not converge to $\{-1,1\}$.   
\end{lemma}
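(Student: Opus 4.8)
The plan is to argue by contradiction: suppose no such non-empty set $\mathcal{S}$ exists, so that \emph{every} spin variable converges to $\{-1,1\}$ as $t\to\infty$. I would first make the notion of ``convergence to $\{-1,1\}$'' precise — for each spin index $p$, either $\s_p(t)\to 1$ or $\s_p(t)\to -1$ — and note that this assigns to $\mathcal{I}$ a well-defined limiting Boolean assignment $\s^*$. The key structural fact to invoke is the one established in the construction (see Eqn.~4 and the discussion around $K_m$): $\V(\s,\aux)=\sum_m a_m K_m(\s)^2$ with $\V(\s^*,\aux)=0$ precisely at solutions, and the gradient flow $\dot{\s}_l = -\nabla_{\s_l}\V$ combined with the auxiliary dynamics $\dot a_m = a_m K_m^\alpha$. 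Since $\mathcal{I}$ is \UNSAT, the limiting assignment $\s^*$ (which is a genuine $\{-1,1\}^N$ point) must violate at least one clause, say clause $m_0$; for that clause $K_{m_0}(\s^*) = 2^{-k}\prod_{p}(1-c_{m_0 p}\s^*_p) \neq 0$, and in fact by continuity $K_{m_0}(\s(t))$ stays bounded away from $0$ for large $t$.

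The second step is to derive the contradiction from the growth of $a_{m_0}$. Because $K_{m_0}(\s(t)) \geq \varepsilon_0 > 0$ for all $t \geq T_0$, the auxiliary equation gives $\dot a_{m_0} = a_{m_0} K_{m_0}^\alpha \geq a_{m_0}\varepsilon_0^\alpha$, so $a_{m_0}(t) \to \infty$ at least exponentially. Now I would look at the spin variable(s) appearing in clause $m_0$ — pick some $l$ with $c_{m_0 l}\neq 0$. The $l$-th component of the gradient flow contains the term $2a_{m_0} c_{m_0 l} K_{m_0 l}(\s)K_{m_0}(\s)$. The hard part is ruling out cancellation: even though $a_{m_0}\to\infty$, the net force on $\s_l$ is a \emph{sum} over all clauses containing $\s_l$, and one must argue these cannot conspire to vanish while all $a_m$ with $m$ in some unsatisfied ``core'' grow. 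Here I would exploit that an \UNSAT\ instance contains a minimal unsatisfiable sub-formula (an \UNSAT\ core): along the supposed convergent trajectory, \emph{every} clause in this core is eventually violated, hence all the corresponding $a_m$ grow exponentially, while the satisfied clauses have $K_m\to 0$ so their $a_m$ stabilize. Restricting attention to the spin variables touched only by core clauses, the force balance would require the growing terms to cancel exactly and for all time — which is non-generic and, crucially, is destroyed by the argument already used in the proof of the ergodicity theorem (any infinitesimal perturbation blows up), so the convergence cannot be an attracting phenomenon. This shows at least one spin variable in the core fails to converge, contradicting our assumption; that variable (or the set of such variables) is the desired $\mathcal{S}$.

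I would present the argument in this order: (i) set up the contradiction hypothesis and extract the limiting assignment $\s^*$; (ii) use unsatisfiability to locate a violated clause $m_0$, or better a whole unsatisfiable core, with $K_m$ bounded below; (iii) integrate the auxiliary dynamics to get exponential growth $a_m(t)\gtrsim e^{\varepsilon_0^\alpha t}$ for core clauses while $a_m$ for satisfied clauses stays bounded; (iv) examine $\dot\s_l$ for a spin variable appearing only in core clauses and show the velocity cannot go to zero — invoking the non-cancellation/perturbation-amplification argument from the ergodicity proof — so $\s_l$ does not settle at $\pm 1$; (v) conclude $\mathcal{S}\neq\emptyset$. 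The main obstacle, as flagged, is step (iv): making rigorous why the growing gradient contributions cannot cancel identically. My intended route is to either (a) reduce to a spin variable contained in exactly one core clause (which exists in many natural cores, e.g. the explicit $3$-SAT gadget given in the introduction), making cancellation impossible since the force is then a single non-vanishing growing term, or (b) if every core spin appears in $\geq 2$ core clauses, use the fact that $\V = \sum a_m K_m^2$ is itself non-decreasing along a trajectory confined to a region where some $K_m$ is bounded below (since $\dot\V = -\|\nabla_\s\V\|^2 + \sum \dot a_m K_m^2$ and the second term blows up), so $\V\to\infty$, which is incompatible with $\s(t)$ approaching a fixed point where $\nabla_\s\V$ and hence the flow would have to be balancing an unbounded quantity. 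Either way the contradiction closes, and the lemma follows; the careful bookkeeping of which $a_m$ grow and which do not is routine once the core is fixed.
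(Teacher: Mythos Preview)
Your contradiction strategy --- assume every spin variable converges to $\pm 1$, extract a limiting assignment $\s^*$, and derive a contradiction from unsatisfiability --- is exactly the paper's approach. However, the paper's proof is dramatically shorter than your proposal: after setting up the contradiction, it simply invokes the result of~\cite{ercsey2011optimization} that the dynamical system's equilibria occur \emph{only} at satisfying assignments (with the caveat about solution clusters separated by a single spin flip). Hence if all spin variables converged to $\{-1,1\}$, the limit $\s^*$ would be an equilibrium and therefore a solution, contradicting the \UNSAT\ hypothesis. That is the entire argument --- no auxiliary-variable growth rates, no \UNSAT\ cores, no cancellation analysis.

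Your steps (ii)--(iv) are effectively an attempt to re-derive from scratch what the paper obtains by citation. This is not unreasonable --- you are right that ``$\s(t)\to\s^*$'' does not \emph{automatically} mean $\s^*$ is an equilibrium of the full $(\s,\aux)$ system, since the $a_m$ are unbounded --- and the paper's proof does gloss over this distinction. But your proposed resolution of step~(iv) is where the extra machinery shows strain: route~(a) (a spin variable in exactly one core clause) need not exist for a general minimal unsatisfiable core, and route~(b) (arguing via $\V\to\infty$) is heuristic as written, since $\V\to\infty$ does not by itself preclude $\s(t)$ from converging. If you want a self-contained argument, a cleaner line is to observe that at a non-solution corner $\s^*$ every violated clause $m_0$ contributes a force on each of its literals $l$ of the form $a_{m_0}c_{m_0 l}K_{m_0}^2(\s^*)$ pushing $\s_l$ \emph{away} from $\s^*_l$ (toward satisfying the literal), and all other clauses containing $l$ that are satisfied at $\s^*$ contribute zero there (since $K_m(\s^*)=0$); so near $\s^*$ the dominant force on $\s_l$ is repulsive and growing, precluding convergence. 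But for the lemma as stated, the paper's two-line citation-based proof suffices.
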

\begin{proof}
In~\cite{ercsey2011optimization}, the authors show that the resulting dynamical system only has equilibria at satisfiable assignments of the progenitor $K$-SAT. Note that solution clusters may form between two solutions that are only separated by a single spin flip. However, since we have restricted ourselves to \UNSAT~instances, $\mathcal{S}\neq\emptyset$. This is easy to prove by contradiction. Assume that the subset is empty, i.e. $\mathcal{S}=\emptyset$, then all spin variables have converged to values in $\{-1,1\}$. Since such an assignment would correspond to an equilibrium, it would contradict our \UNSAT~assumption for instance $\mathcal{I}$.
\end{proof}

\begin{theorem}\label{thm: lyapunov}
Any unsatisfiable (\UNSAT) instance of the $K$-SAT and 2-Lin-$k$ equations, that is used to define unique games, must display sensitive dependence to initial conditions (positive Lyapunov exponents). Specifically, unsatisfied clauses lead to exponential growth in values of the auxiliary variables that, in turn, cause local exponential growth of initial perturbations to the spin variables $\s_{l}$.
\end{theorem}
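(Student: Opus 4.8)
The plan is to show that the exponential growth of the auxiliary variables $a_m$ associated to perpetually-unsatisfied clauses forces the linearized flow (the variational equations) to have a positive growth rate along at least one direction, which is precisely sensitive dependence. First I would invoke \Cref{lem:set}: for an \UNSAT~instance $\mathcal{I}$ there is a nonempty set $\mathcal{S}$ of spin variables that never converges to $\{-1,1\}$, and consequently there is at least one clause $m^\star$ that is never satisfied for all $t$ (if every clause were eventually satisfied, the trajectory would approach a solution equilibrium, contradicting \UNSAT). For that clause $K_{m^\star}(\s(t))$ stays bounded away from $0$; more carefully, since the $\s_l$ remain in the compact set $\C=[-1,1]^N$ and $K_{m^\star}$ vanishes only on the satisfied region, one has $\int_0^t K_{m^\star}(\s)\,dt \to \infty$, and integrating $\dot a_{m^\star} = a_{m^\star} K_{m^\star}^\alpha$ gives $a_{m^\star}(t) = a_{m^\star}(0)\exp\!\big(\int_0^t K_{m^\star}^\alpha \,dt\big)$, which grows without bound (at least like $e^{ct}$ on the sojourn times where $K_{m^\star}$ is bounded below).

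Next I would write down the variational equation for a perturbation $\delta\s$ of the spin variables. Differentiating the first line of Eqn.~\ref{eqn:dyn}, the Jacobian block governing $\delta\dot\s$ contains the term $\sum_m 2 a_m c_{ml}\,\partial_{\s_{l'}}\!\big(K_{ml}K_m\big)$, and the dominant contribution comes from $m=m^\star$ because $a_{m^\star}$ is exponentially large compared to the $a_m$ of satisfied clauses (which converge to finite limits). I would then localize to a time window and a spin variable $\s_p$ with $p\in\mathcal{S}$ appearing in clause $m^\star$: along such coordinates $c_{m^\star p}\neq 0$ and $K_{m^\star p}K_{m^\star}$ does not identically vanish, so the effective linear coefficient multiplying $\delta\s_p$ is of order $a_{m^\star}(t)\sim e^{ct}$. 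Comparing $\|\delta\s(t)\|$ to the solution of the scalar majorant $\dot u = c\, e^{ct} u$ — whose solution is $u(t)=u(0)\exp\!\big((e^{ct}-1)\big)$, a doubly-exponential blow-up — shows that small perturbations are amplified, giving a strictly positive (indeed unbounded) finite-size Lyapunov exponent. This is the quantitative version of the claim "positive Lyapunov exponents," and it also explains why choosing larger $\alpha$ changes the magnitude of the FSLE (through the $K_{m^\star}^\alpha$ exponent) without altering the underlying combinatorial hardness.

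The step I expect to be the main obstacle is the possibility of \emph{exact cancellation}: the growing $a_{m^\star}K_{m^\star}$ term enters $\dot\s_l$ multiplied by $c_{ml}$, and a priori the contributions from several exponentially growing $a_m$'s could cancel so that $\dot\s_l$ itself stays bounded even while the individual terms blow up. The resolution, following the argument already used in the escaping-domain discussion in the ergodicity proof, is two-fold: (i) generically such cancellation does not occur, and (ii) even in the non-generic case, the cancellation is not robust — any infinitesimal perturbation of the spin coordinates destroys it, and since the $a_m$ prefactors are exponentially large, the residual after an imperfect cancellation still grows exponentially. I would phrase this as: the linearized dynamics restricted to the $\mathcal{S}$ coordinates cannot be uniformly contracting or neutral, because that would require the exponentially large terms to cancel for \emph{all} nearby initial conditions, which is impossible when the coefficients are multiplied by distinct unbounded functions $a_m(t)$ that are not proportional to one another. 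Making this "non-robust cancellation" step rigorous — rather than appealing to genericity — is where the real work lies; a clean way is to show that the time-averaged trace of the Jacobian block on $\mathcal{S}$ is bounded below by a positive constant, since $\mathrm{tr}$ picks up $\sum_m 2a_m c_{ml}^2 (\partial_{\s_l} K_{ml})K_m \geq$ a positive exponentially growing quantity from $m^\star$, and a positive average trace forces at least one positive Lyapunov exponent.

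For the full proof I would then assemble these pieces: \Cref{lem:set} $\Rightarrow$ existence of a never-satisfied clause $m^\star$ $\Rightarrow$ $a_{m^\star}(t)$ grows (at least) exponentially $\Rightarrow$ the variational equation on $\mathcal{S}$ has exponentially large coefficients whose net effect cannot vanish robustly $\Rightarrow$ the finite-size Lyapunov exponent is positive, which is the definition of sensitive dependence on initial conditions. I would also remark that the same chain of reasoning is alphabet- and $K$-independent, so it applies verbatim to both the $K$-SAT embedding and the $2$-Lin-$k$ embedding, and note the contrast with~\cite{ercsey2011optimization}: the chaos is \emph{not} tied to constraint density or to homoclinic/heteroclinic tangles but is an automatic consequence of the auxiliary-variable dynamics whenever the instance is \UNSAT.
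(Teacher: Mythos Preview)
Your high-level architecture matches the paper's: invoke \Cref{lem:set}, identify a clause that is never satisfied, use $a_{m}(t)=a_{m}(0)\exp\big(\int_0^t K_m^\alpha\,d\tau\big)$ to get unbounded growth of the auxiliary variable, and feed this into a first-order perturbation of the spin dynamics. The paper carries this out by perturbing a single coordinate $\s_r\in\mathcal S$ by $\delta$, computing $K_m'$, $K_{ml}'$ to first order in $\delta$, propagating via an Euler discretization, and then bounding $\tfrac{1}{t}\ln\big(|\Delta\s_r(t)|/|\delta|\big)$ from below by $\lim_{t\to\infty}\tfrac{1}{t}\int_0^t K_g^\alpha\,d\tau$ for a single clause $g$ containing $r$.

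The substantive divergence is in how the two arguments handle the cancellation obstacle you correctly flag. The paper sidesteps it cleanly by looking at the \emph{diagonal} case $l=r$: there the first-order correction to $\dot\s_r$ is exactly $-\delta\sum_m 2a_m K_{mr}^2$, and every summand is non-negative, so one can lower-bound the whole sum by the single term $2a_g K_{gr}^2$ coming from a perpetually-unsatisfied clause $g$. No genericity or robustness argument is needed. Your proposed trace argument, by contrast, contains a concrete error: $K_{ml}$ is the product with the $l$-th factor \emph{removed}, so $\partial_{\s_l}K_{ml}=0$ identically, and the actual diagonal entry of the spin--spin Jacobian is $\sum_m 2a_m c_{ml}K_{ml}\,\partial_{\s_l}K_m=-\sum_m 2a_m K_{ml}^2\le 0$. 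The trace of that block is therefore negative, not positive, and the argument ``positive average trace $\Rightarrow$ positive Lyapunov exponent'' does not go through as written. What survives is precisely the paper's observation: the magnitude of this same diagonal quantity grows like $a_{m^\star}$, and because all summands share a sign one may isolate a single exponentially growing term. Replace your trace step with that non-negativity observation and your outline becomes essentially the paper's proof.
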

\begin{proof}
Let us show, analytically, that the sensitive dependence to initial conditions occurs as a consequence of the dynamics of the auxiliary variables, $a_m$. We use perturbation analysis to study the sensitivity of the variables to initial conditions. 

In the following, let $\mathbf{(s, a)}$ and $\mathbf{(s', a')}$ represent the original and perturbed variables of the system respectively. At $t=0$, let us assume that we perturb the initial conditions for a random spin variable $\s_r(0)$ that belongs to the set $\mathcal{S}$ (see lemma~\ref{lem:set} above), by an infinitesimal value $\delta$. All other variables in the dynamical system are left unperturbed,
\begin{align*}
\s_{r}^{'}(0) &= \s_{r}(0)+\delta,\\
\s_{p}^{'}(0) &= \s_{p}(0), \quad\quad \forall p \neq r \\
a_m'(0) &= a_m(0) \quad\quad \forall m =1,2,\cdots,M
\end{align*}
We note that the choice of $\s_{r}$ is completely arbitrary and one can pick any variable that belongs to the set $\mathcal{S}$. Moreover, one is free to perturb any number of variables from the set $\mathcal{S}$. Now, at any time $t$, $\mathbf{a}(t) = \mathbf{a}(0)e^{\int_{0}^{t}K_m^{\alpha}(\mathbf{s}(\tau))d\tau}$. Additionally, in the following analysis, for ease of notation, we will represent $K_m(\mathbf{s}(t))$ as $K_m(t)$ and, similarly, $K_{ml}(\mathbf{s}(t))$ will be $K_{ml}(t)$. The expressions for $K_m'(0)$ and $K_{ml}'(0)$ in terms of $K_m(0)$ and $K_{ml}(0)$ are obtained as follows,
\begin{align*}
K_m'(0) &= 2^{-k} \prod_{p=1}^{N} (1-c_{mp}\s_{p}^{'}(0)),\\
&= 2^{-k} (1-c_{mr}(\s_{r}(0)+\delta)) \prod_{\substack{p=1\\p\neq r}}^{N} (1-c_{mp}\s_{p}(0)),\\
&= 2^{-k} \prod_{p=1}^{N} (1-c_{mp}\s_{p}(0)) - 2^{-k} c_{mr}\delta \prod_{\substack{p=1\\p\neq r}}^{N} (1-c_{mp}\s_{p}(0)),\\
&= K_m(0) - \delta c_{mr}K_{mr}(0).
\end{align*}

Now, let $K_{ml}$ denote $K_m$ without the $(1-c_{ml}\s_{l})$ term. Then, for $l \neq r$,
\begin{align*}
K_{ml}'(0) &=2^{-k} \prod_{\substack{p=1\\p\neq l}}^{N} (1-c_{mp}\s_{p}^{'}(0)),\\
&= 2^{-k} (1-c_{mr}(\s_{r}(0)+\delta))\prod_{\substack{p=1\\p\neq l\\p\neq r}}^{N} (1-c_{mp}\s_p(0)),\\
&= K_{ml}(0) - \delta c_{mr}K_{mlr}(0).
\end{align*}

Where, $K_{mlr}$ simply denotes $K_m$ without the $(1-c_{ml}\s_{l})$ and $(1-c_{mr}\s_{r})$ terms,
\[K_{mlr} = 2^{-k} \prod_{\substack{p=1\\p\neq l\\p\neq r}}^{N} (1-c_{mp}\s_{p}).\] Note that for the case $l=r$, it is trivial to show that $K_{mr}'(0) = K_{mr}(0)$. To compute the sensitivity of the system of equations to the perturbation $\delta$, we calculate the derivatives of $\s_{l}^{'}$ at time $t=0$.
For $l \neq r$,
\begin{align*}
\frac{d\s_{l}^{'}}{dt}\Big|_{t=0}&= \sum_{m=1}^{M} 2a_m(0)c_{ml}K_{ml}'(0)K_m'(0),\\
&= \sum_{m=1}^{M} 2a_m(0)c_{ml}\Big( K_{ml}(0) - \delta c_{mr}K_{mlr}(0)\Big)\Big( K_m(0) - \delta c_{mr}K_{mr}(0)\Big),\\
&=  \sum_{m=1}^{M} 2a_m(0)c_{ml} K_{ml}(0) K_m(0) -\delta  \sum_{m=1}^{M} 2a_m(0)c_{ml}c_{mr} \Big(K_{mr}(0) K_{ml}(0)+K_{m}(0) K_{mlr}(0)\Big) \\& \qquad+ \delta^2 \sum_{m=1}^{M} 2a_m(0)c_{ml} K_{mr}(0) K_{mlr}(0),  \\
&=  \frac{d\s_{l}}{dt}\Big|_{t=0}-\delta  \sum_{m=1}^{M} 4a_m(0)c_{ml} c_{mr}K_{mr}(0) K_{ml}(0) + \delta^2 \sum_{m=1}^{M} 2a_m(0)c_{ml} K_{mr}(0) K_{mlr}(0).
\end{align*}

For the $l=r$ case we get,
\begin{align*}
\frac{d\s_{r}^{'}}{dt}\Big|_{t=0}&= \sum_{m=1}^{M} 2a_m(0)c_{mr}K_{mr}'(0)K_m'(0),\\
&= \sum_{m=1}^{M} 2a_m(0)c_{mr}K_{mr}(0)\Big( K_m(0) - \delta c_{mr}K_{mr}(0)\Big),\\
 &= \frac{d\s_{r}}{dt}\Big|_{t=0}- \delta  \sum_{m=1}^{M} 2a_m(0)K_{mr}^2(0).
\end{align*}

We now use the standard Euler approximation of the form, $\frac{d\s_{l}^{'}}{dt}\Big|_{t=0} \approx \frac{\s_{l}^{'}(\Delta t)-\s_{l}^{'}(0)}{\Delta t}$. For $l \neq r$ case we get,
\begin{align*}
\s_{l}^{'}(\Delta t) &= \s_{l}(\Delta t) + (\s_{l}^{'}(0)-\s_{l}(0))- \Delta t \delta  \sum_{m=1}^{M} 4a_m(0)c_{ml} c_{mr}K_{mr}(0) K_{ml}(0) \\&\qquad+ \Delta t \delta^2 \sum_{m=1}^{M} 2a_m(0)c_{ml} K_{mr}(0) K_{mlr}(0),\\
&= \s_{l}(\Delta t) - \Delta t \delta  \sum_{m=1}^{M} 4a_m(0)c_{ml} c_{mr}K_{mr}(0) K_{ml}(0) +\mathcal{O}(\delta^2 \Delta t),\\
&= \s_{l}(\Delta t) - \Delta t \delta  T_1^{r,l} +\mathcal{O}(\delta^2 \Delta t).
\end{align*}
Where we denote $T_1^{r,l}  = \sum_{m=1}^{M} 4a_m(0)c_{ml} c_{mr}K_{mr}(0) K_{ml}(0) $. For the $l=r$ case, the expression reduces to,
\begin{align*}
\s_{r}^{'}(\Delta t) &= \s_{r}(\Delta t) + (\s_{r}^{'}(0)-\s_{r}(0))- \Delta t  \delta  \sum_{m=1}^{M} 2a_m(0)K_{mr}^2(0),\\
&= \s_{r}(\Delta t) + \delta -\Delta t  \delta  \sum_{m=1}^{M} 2a_m(0)K_{mr}^2(0),\\
&=  \s_{r}(\Delta t) + \delta -\Delta t  \delta  T_2^{r,r}.
\end{align*}
Here, we set $T_2^{r,r} = \sum_{m=1}^{M} 2a_m(0)K_{mr}^2(0)$. \\

The above expressions capture the effect of the perturbation $\delta$ on the spin variables after a single time step $\Delta t$. We now evolve the perturbation forward in time to assess its impact on the long-term dynamics of the system. \\

It can be shown that at time $t$,
$$K_m'(t) = K_m(t) - \delta c_{mr}K_{mr}(t) + \mathcal{O}(\delta \Delta t),$$
$$K_{ml}'(t) =K_{ml}(t) - \delta c_{mr}K_{mlr}(t)+ \mathcal{O}(\delta \Delta t),$$
$$K_{mr}'(t) =K_{mr}(t) + \mathcal{O}(\delta \Delta t).$$
Note that, in the above expressions, we neglect second order and above terms (in $\delta$), since they will result in third order terms in the final equations (which are eventually neglected). This gives,

\begin{align*}
 \s_{l}^{'}(t+\Delta t) &= \s_{l}(t+\Delta t)-\Delta t \delta  \sum_{m=1}^{M}\sum_{n=0}^{t/\Delta t} 4a_m'(n\Delta t)c_{ml}c_{mr}K_{mr}(n\Delta t) K_{ml}(n\Delta t) +\mathcal{O}(\delta^2 \Delta t),
 \end{align*}
 \begin{align*}
 \s_{r}^{'}(t+\Delta t) &= \s_{r}(t+\Delta t) + \delta  - \Delta t \delta  \sum_{m=1}^{M}\sum_{n=0}^{t/\Delta t} 2a_m'(n\Delta t)K_{mr}^2(n\Delta t).
 \end{align*}
 Here, $$a_m'(n\Delta t) = a_m(n\Delta t)e^{-\Delta t\delta c_{mr}\sum_{j=0}^{n-1}K_{mr}(j\Delta t)}.$$


We use the standard definition of the Lyapunov exponent~\cite{strogatz2018nonlinear} that captures the rate of separation of trajectories that start infinitesimally close to one another. It is a standard tool used to quantify the ``sensitive dependence to initial conditions'' requirement of chaotic dynamics,
\[\lambda_r = \lim_{t \to \infty} \lim_{\Delta \s_{r}(0)\to 0} \frac{1}{t} \ln \frac{|\Delta \s_{r}(t)|}{|\Delta \s_{r}(0)|}.\]
Now, if $\Delta\s_{r}(0) = \delta$, is the initial separation between trajectories, the separation between $\s_{r}^{'}(t)$ and $\s_{r}(t)$ can be approximated as follows,
\begin{align*}
|\Delta\s_{r}(t)| &= |\s_{r}^{'}(t)-\s_{r}(t)|,\\
&= |\delta  - \Delta t \delta  \sum_{m=1}^{M}\sum_{n=0}^{t/\Delta t} 2a_m'(n\Delta t)K_{mr}^2(n\Delta t)|,\\
& \approx |\Delta t \delta  \sum_{m=1}^{M}\sum_{n=0}^{t/\Delta t} 2a_m'(n\Delta t)K_{mr}^2(n\Delta t)|.
\end{align*}
Note that the above approximation is true since the magnitude of the second term dominates $\delta$ as $t \to \infty$. This results in,
\begin{align*}
\lim_{\delta\to 0} \ln \frac{|\Delta \s_{r}(t)|}{|\delta|} &= \lim_{\delta\to 0}  \ln\Bigg(\Big|\Delta t  \sum_{m=1}^{M}\sum_{n=0}^{t/\Delta t} 2a_m'(n\Delta t)K_{mr}^2(n\Delta t)\Big| \Bigg),\\
&= \lim_{\delta\to 0}  \ln\Bigg(\Big|\Delta t  \sum_{m=1}^{M}\sum_{n=0}^{t/\Delta t} 2a_m(n\Delta t)e^{-\Delta t\delta c_{mr}\sum_{j=0}^{n-1}K_{mr}(j\Delta t)}K_{mr}^2(n\Delta t)\Big| \Bigg),\\
&=  \ln\Bigg(\Big|\Delta t  \sum_{m=1}^{M}\sum_{n=0}^{t/\Delta t} 2a_m(n\Delta t)K_{mr}^2(n\Delta t)\Big| \Bigg),\\
&\geq  \ln\big(2 a_g(t)K_{gr}^2(t) \Delta t\big).
\end{align*}
where $g$ is the index of any clause that contains $\x_r$. Here, the choice of the clause and, consequently, the value of $g$ is left to the reader. Since each term in the double summation is non-negative, this choice has no impact on the final result. Therefore, we get,
\begin{align*}
\lambda_r &= \lim_{t \to \infty} \lim_{\delta\to 0} \frac{1}{t} \ln \frac{|\Delta\s_{r}(t)|}{|\delta|},\\
& \geq \lim_{t \to \infty} \frac{1}{t} \ln\big(2 a_g(t)K_{gk}^2(t) \Delta t\big),\\
&= \lim_{t \to \infty} \frac{1}{t} \ln\big(2 a_g(0)e^{\int_{0}^{t}K_g^{\alpha}(\tau)d\tau}K_{gk}^2(t) \Delta t\big),\\
&= \lim_{t \to \infty} \frac{1}{t} \int_{0}^{t}K_g^{\alpha}(\tau)d\tau + \frac{1}{t} \ln\big(2 a_g(0)K_{gk}^2(t) \Delta t\big).\\
\end{align*}
Since the limit of the second term goes to zero as $t\to \infty$ we get the following expression,
\begin{align}
\lambda_r = \lim_{t \to \infty} \frac{1}{t} \int_{0}^{t}K_g^{\alpha}(\tau)d\tau.
\end{align}
It is easy to check that, for any \UNSAT~problem,  time $t$, and $\epsilon>0$, there exists a time interval $t'(\alpha, \epsilon)$, such that for some $\epsilon > 0$, \[\int_{t}^{t+t'}K_g ^{\alpha}(\tau)d\tau \geq \epsilon. \]
Since we will be taking the limit of $t\rightarrow\infty$, we assume that $t>>t'(\alpha, \epsilon)$. By taking $\frac{t}{t'(\alpha, \epsilon)}$ intervals we can show that,
\begin{align}
 \lambda_r(\alpha) &\geq \lim_{t \to \infty} \frac{1}{t} \frac{t}{t'(\alpha, \epsilon)}\epsilon,\nonumber\\
 &\geq  \frac{\epsilon}{t'(\alpha, \epsilon)}.
\end{align}
Since $\epsilon>0$, $\lambda_r$ \emph{must} be positive. Consequently, the maximum Lyapunov exponent (MLE) must also be positive, given by $\displaystyle\max_{r}\lambda_r > 0$. A positive MLE is considered as a key indicator of chaos. Moreover, we observe that the chaos in the system originates from the natural exponential growth dynamics of the auxiliary variables $a_m$. We remind the reader that, although the $a_m$ dynamics are unbounded, the dynamics of the spin variables $\s$ remain bounded to the cube $[-1,1]^N$.
\end{proof}
\begin{remark}
\label{remark:fsle}
Note that since $K_g(\tau) \in [0,1]$, as $\alpha $ increases, $(K_g(\tau))^{\alpha}$ decreases.  Consequently, the time interval $t'(\alpha, \epsilon)$ required for $\int_{t}^{t+t'}K_g^{\alpha}(\tau)d\tau \geq \epsilon$ increases. This, in turn, causes $\lambda_k(\alpha)$ to decrease as $\alpha$ increases (see Fig.~\ref{fig:fsle_average} for numerical confirmation).
\end{remark}
\begin{figure}
\centering
\resizebox{0.6\textwidth}{!}{
%
%
\begin{tikzpicture}

\begin{axis}[%
xmin=1,
xmax=4,
xlabel={$\alpha$},
ymode=log,
ymin=0.000932,
ymax=0.1019,
yminorticks=true,
ylabel={Average FSLE},
]
\addplot [color=blue, line width=2.0pt, mark=square, mark options={solid, blue}, forget plot]
  table[row sep=crcr]{%
1	0.1019\\
2	0.0456\\
3	0.0128\\
4	0.000932\\
};
\end{axis}

\end{tikzpicture}%
}
\caption{The average finite size Lyapunov exponent as a function of the exponent $\alpha$ for randomly generated instances of the UGC. The simulations were averaged for two different instances with $50$ randomly generated initial conditions in each instance.}
\label{fig:fsle_average}
\end{figure}
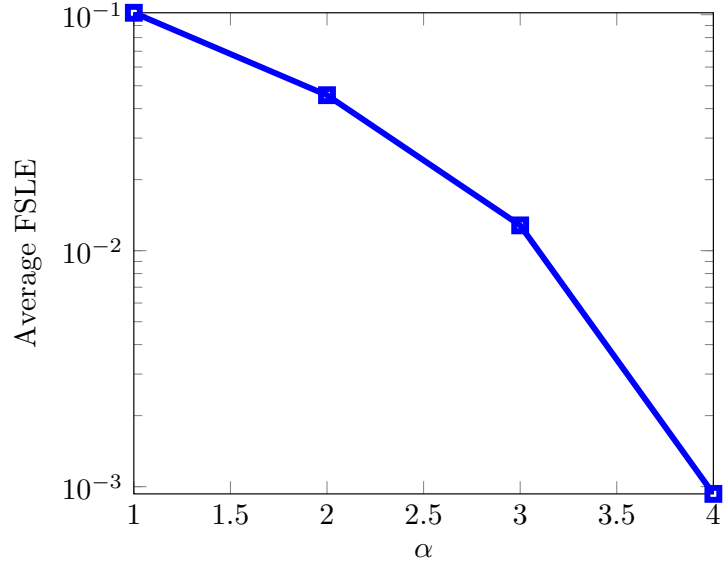

\begin{remark}
Interestingly, the initial distribution of the auxiliary variables $a_m$ has a significant influence on the onset of chaos in the system as shown in Figure \ref{FSLE}. The figure compares the finite size Lyapunov exponent (FSLE) for two different initializations of $a_m$ for an \UNSAT~system with clause density close to the phase transition or frozen regime~\cite{ercsey2011optimization}. This result emphasizes the fact that the auxiliary variables strongly influence the sensitive dependence to initial conditions in the underlying dynamical system.

\begin{figure}[htbp]
        \centering
        \begin{subfigure}[b]{0.49\textwidth}
            \centering
             \includegraphics[width=\textwidth]{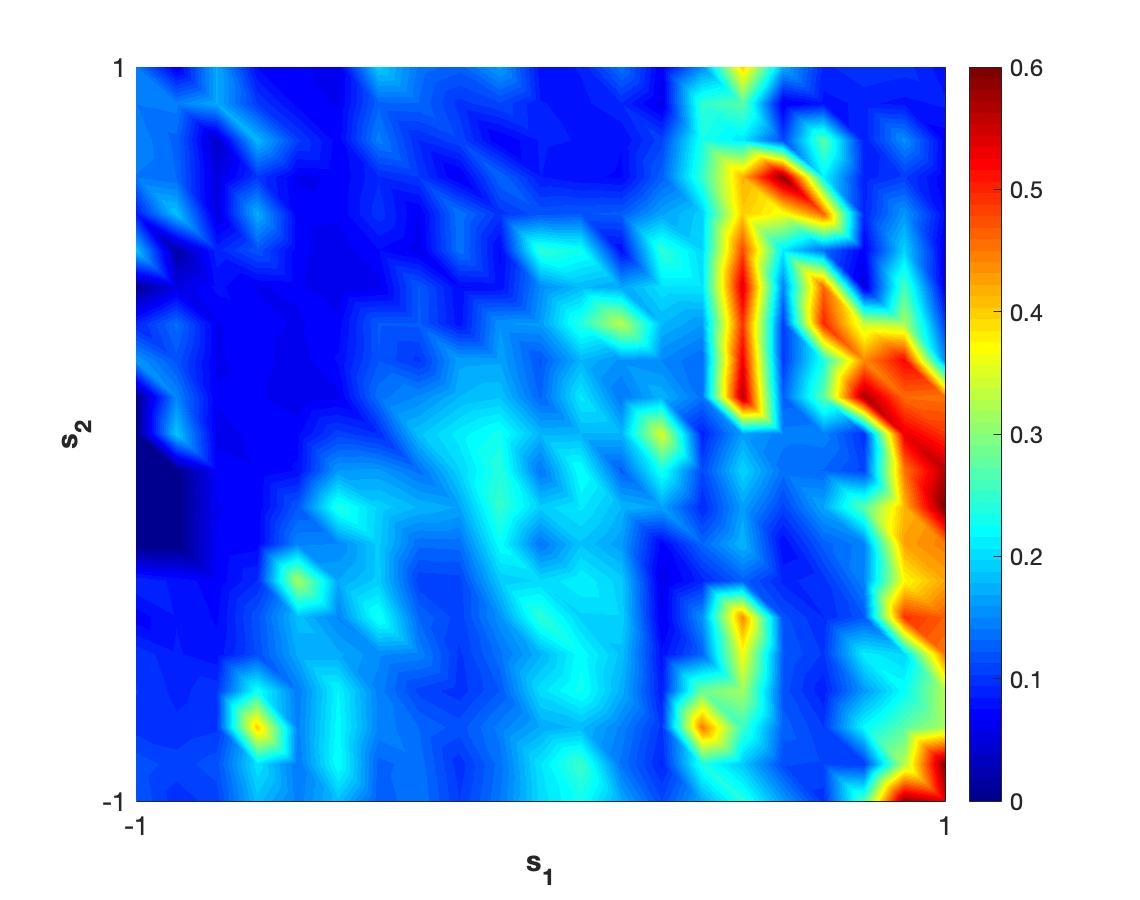}
            \caption{ $a_{m}$ initialized to 1}
            \end{subfigure}
            \hfill
        \begin{subfigure}[b]{0.49\textwidth}
            \centering
             \includegraphics[width=\textwidth]{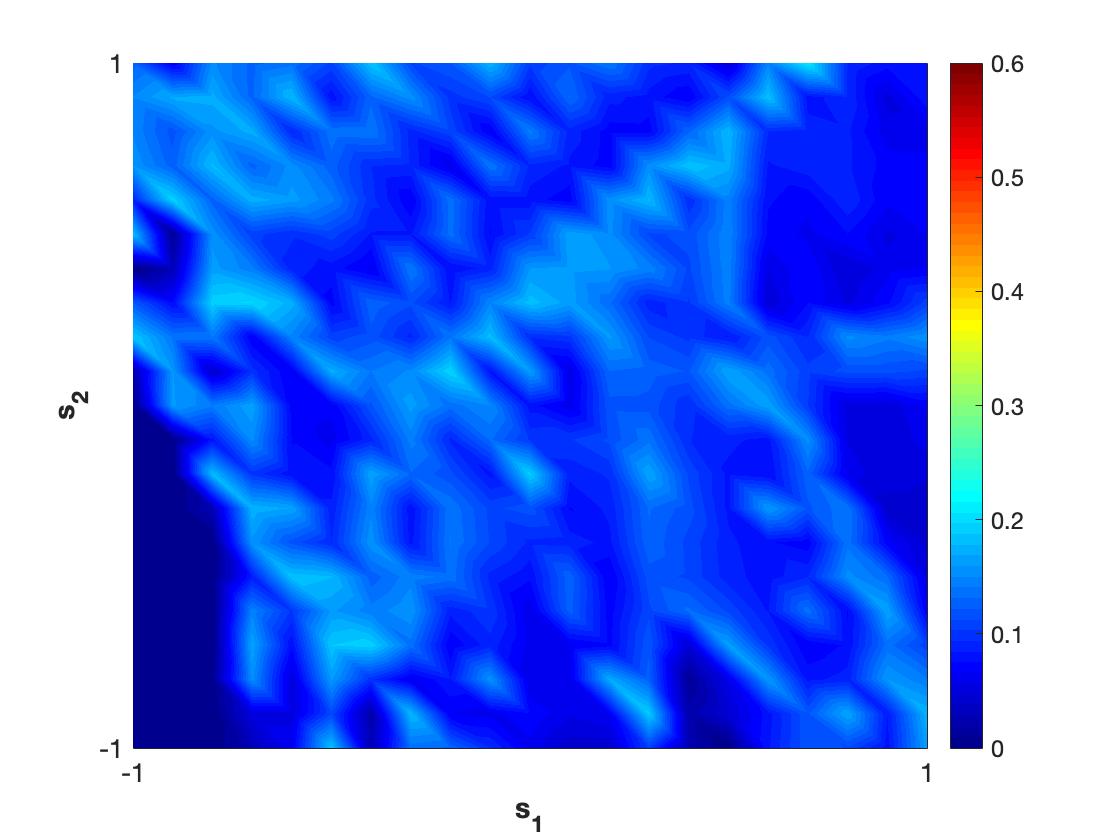}
            \caption{$a_{m}$ chosen uniformly at random between $(0,1)$}
        \end{subfigure}
        \caption{Comparing the finite size Lypanov exponent (FSLE) for an \UNSAT~system. The FSLE values are represented as colors. Note the prominence of chaotic behaviour in the case when the auxiliary variables are \emph{all} initialized to 1.}

            \label{FSLE}
\end{figure}
\end{remark}

\begin{remark}
As mentioned previously, in satisfiable (\texttt{sat}) instances, the convergence of the dynamical system to a global minima strongly depends on the dynamics of the auxiliary variables. The inherent exponential growth of $a_m$ prevents the existence of stable limit cycles and spurious attractors~\cite{ercsey2011optimization}. Additionally, it ensures that stable fixed points correspond to solutions of the SAT problem. However, the exponential growth of $a_m$ results in highly stiff systems that increase computational cost. In this work, we use the following dynamics for the auxiliary variables ($\alpha=2$), 
\[\frac{da_m}{dt} = a_mK_m^2, \hspace{1cm} \forall m=1,2,\hdots,M.\]
As shown before, $K_m \in [-1,1]$, consequently, $K_m^2 \leq K_m$. While the modified dynamics continues to ensure the convergence of the solution of the SAT problem without getting trapped in local minima, the resulting dynamical system is less stiff and results in more desirable convergence properties for numerical integration.
\end{remark}

\section{Numerical results for the dynamical system}
\label{sec:numer}
In this section, we provide additional details on computing the scaling of the exponent $f(\delta,\epsilon,k)$. As mentioned in the main text, the computational complexity of the UGC for different values of $(n_{\x},\delta,\epsilon,k)$ can be captured by the scaling of the best possible algorithms for the problem, given by, $O(\text{exp}(n_{\x}^{f(\delta,\epsilon,k)}))$. One can check that if $f(\delta,\epsilon,k) < 1$, the algorithm is subexponential, while $f(\delta,\epsilon,k)\geq 1$ implies exponential scaling.

Let $\mathcal{Y}$ be the fraction of time that the trajectory spends in the vicinity of any assignment that satisfies at least a $\delta$ fraction of the equations. We can then capture the scaling of $\mathcal{Y}$ as follows,
\begin{align}
\label{eq:fscale}
 \mathcal{Y} &= O(\exp{(-n_{\x}^{f(\delta,\epsilon,k)})}), \nonumber\\  
   &= \beta\exp{(-n_{\x}^{f(\delta,\epsilon,k)})}, \nonumber \\
  \ln(\mathcal{Y}) &= \ln\beta - n_{\x}^{f(\delta,\epsilon,k)}, \nonumber \\
  f(\delta,\epsilon,k) &= \log_{n_{\x}}(\ln\beta - \ln(\mathcal{Y})).
\end{align}
For each value of $\epsilon$ and $k$, we generate a random instance of the unique games as described in Fig.~3 of the main text. We compute $\mathcal{Y(\delta)}$  by simulating the dynamical system (integrating Eqn.~4 of the main text), and estimating the decay rates of weight of the invariant measure in an $L_1$ ball around assignments that solve at least a $\delta$ fraction of equations. 

The corresponding dynamical system is simulated for $600$ sec in the Matlab programming language. Numerical integration is performed using ODE solverssuch as \texttt{ode45}, and \texttt{ode15s} or \texttt{ode23tb} for stiffer cases. The time spent by the system in the vicinity of assignments satisfying $0 < \delta \leq 1-\epsilon$ fraction of equations is computed from the simulated trajectories. Specifically, $\mathcal{Y}(\delta)$ is computed as the fraction of the time spent satisfying at least a $\delta$ fraction of the equations. Note that transient dynamics, when switching from one variable assignment (in the original 2-Lin-$k$ system) to another, are ignored. In other words, the transient dynamics time is spent satisfying no equations, and is, therefore, not included in the computation of $\mathcal{Y}(\delta)$. Our results are compiled by averaging over $300-500$ random initializations of the spins and auxiliary variables. Our numerical experiments were performed for problems of the size $n_{\x} \approx 11$ and $n_{\text{eq}} \approx 30$. 

We use the expression in Eqn.~\ref{eq:fscale} to compute $f(\delta, \epsilon, k)$ from $\mathcal{Y}$. Specifically, for each data point in $\mathcal{Y}(\delta)$, we compute the corresponding $f(\delta, \epsilon, k)$. For example, see Fig.~\ref{fig:exponent_k30}, where we present $f(\delta,\epsilon)$ for $k=30$. Here, one can clearly observe the transition of the problem from subexponential scaling to the exponential scaling consistent with the onset of NP-hardness. In the main text, Fig.~6 presents a similar plot where $\epsilon$ is held constant.
\begin{figure}
    \centering
    \includegraphics[width=0.9\textwidth]{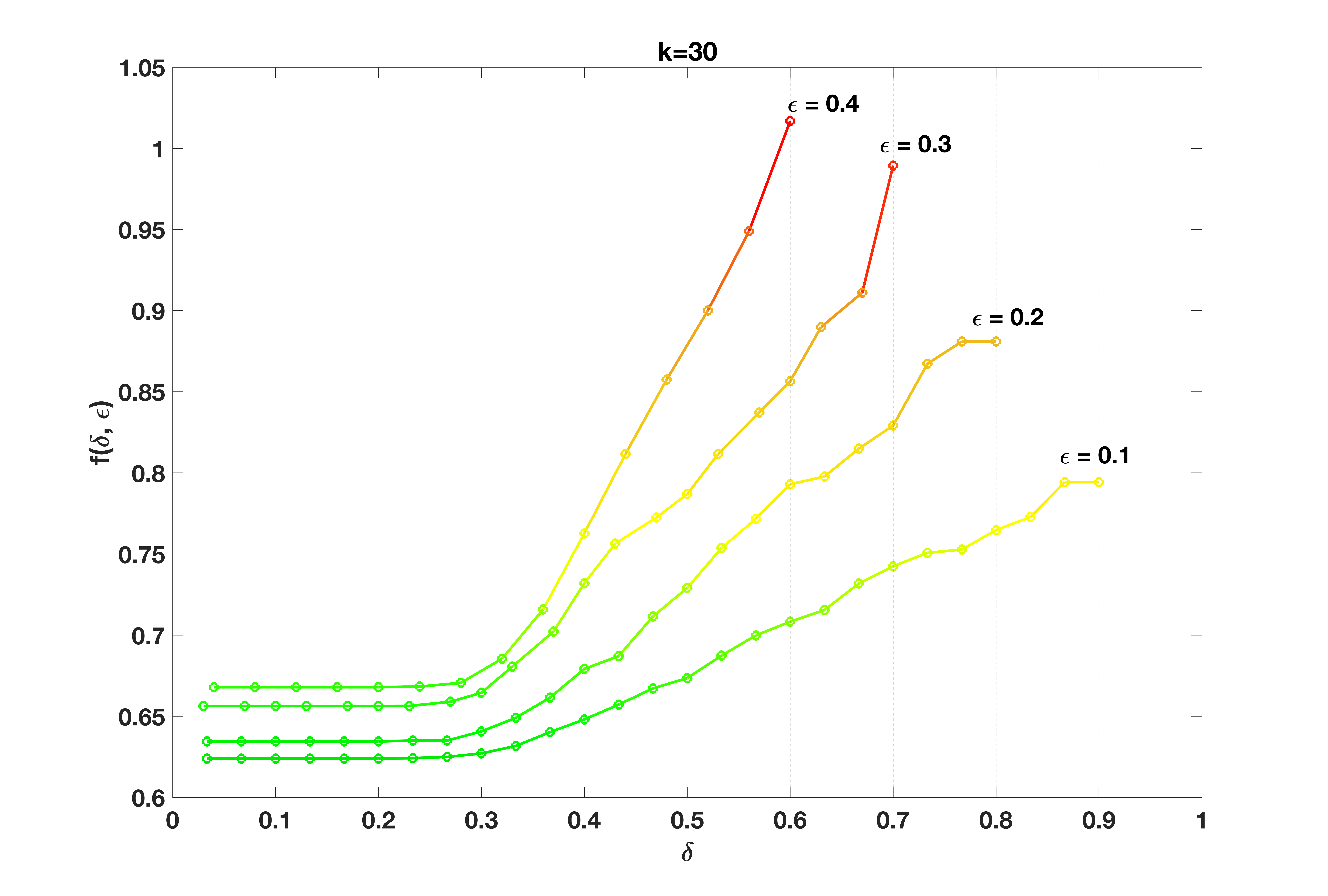}
    \caption{Exponent that captures the scaling for solving $2$-Lin-$k$ systems of equations on a finite field of $k=30$ as a function of $\delta$.}
    \label{fig:exponent_k30}
\end{figure}

\section{A Hardness Landscape Hypothesis}
\label{sec:hardness}
In~\cite{barak2018halfway}, Boaz Barak hypothesized a hardness landscape in terms of the completeness $(c)$ and soundness $(s)$ formulation of the unique games conjecture~\cite{khot2002power}. We note that the $(c,s)$ formulation is equivalent to the $(\epsilon,\delta)$ used in our work. In the $(c,s)$ formulation, given a $2$-Lin-$k$ system, the goal of the unique games conjecture, is to distinguish between the completeness and soundness cases. In the completeness case, there exists an assignment such that a $c$ fraction of equations are satisfied. While in the soundness case, \emph{all} assignments satisfy at most an $s$ fraction of equations. 

It is easy to check that the problem becomes easier as the gap between $c$ and $s$ increases. In~\cite{barak2018halfway}, Barak hypothesized that the hardness landscape has a structure as depicted in Fig.~\ref{fig:hardness_landscape}. In our work, for fixed $k$, we computed the fraction of time the trajectory spends in the vicinity of the optimal assignment and found that it reproduces Barak's hypothesized hardness landscape. The computational results display a clean transition from subexponential to exponential scaling. See Fig.~6 in the main text and associated discussions. These results seem to point to the fact that the UGC is likely true and there exist deep connections between computational complexity and dynamical systems theories. 

\begin{figure}
    \centering
    \includegraphics[width=0.8\textwidth]{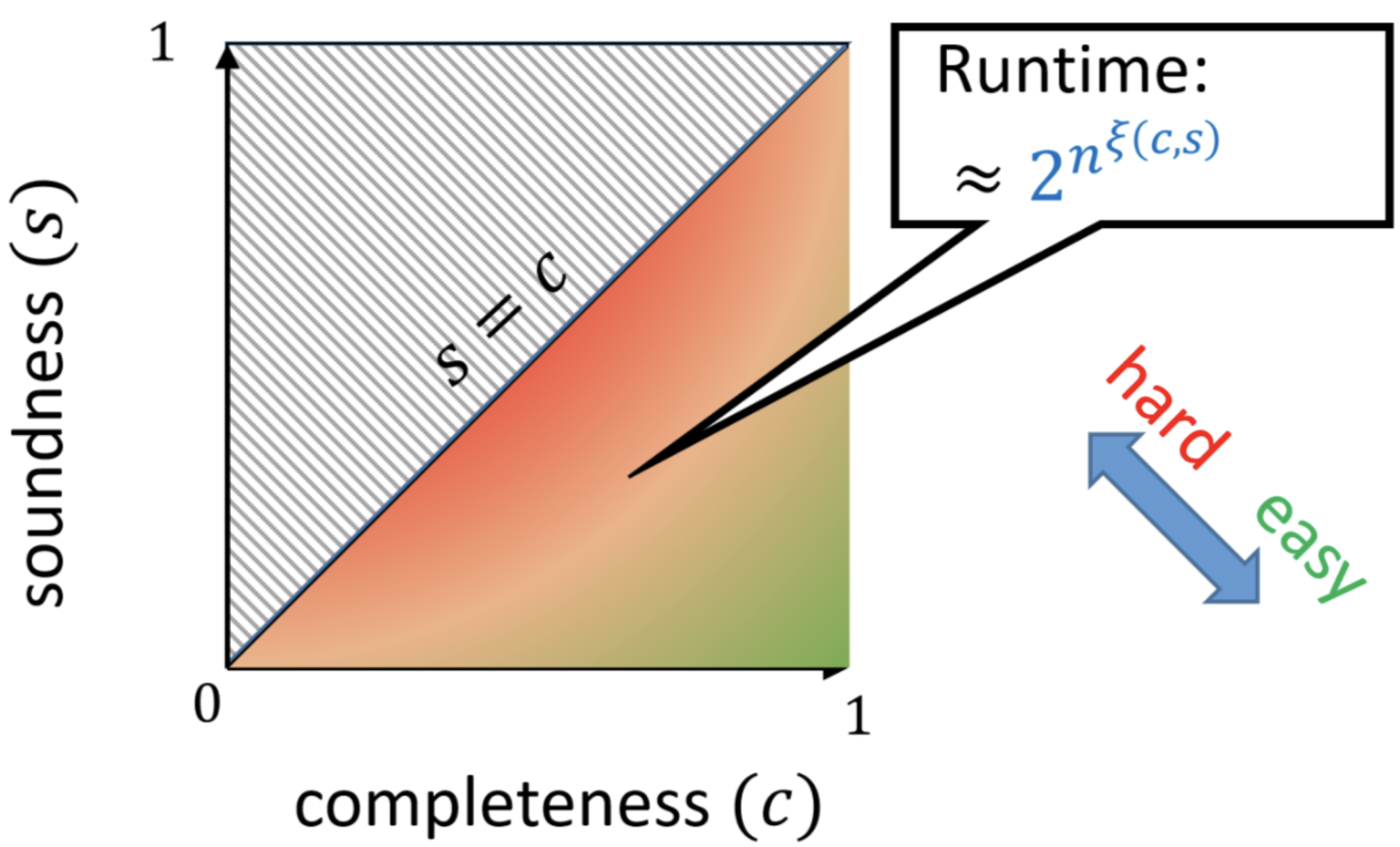}
    \caption{Hardness landscape hypothesized in~\cite{barak2018halfway}}.
    \label{fig:hardness_landscape}
\end{figure}

\bibliographystyle{unsrt}
\bibliography{ugc_bib}
\end{document}